 \newtheorem{theorem}{Theorem}[section]
 \newtheorem{corollary}[theorem]{Corollary}
 \newtheorem{lemma}[theorem]{Lemma}
 \theoremstyle{remark}
 \newtheorem{remark}[theorem]{Remark}
 \newtheorem{example}[theorem]{Example}
 \numberwithin{equation}{section}
\title{\textbf{\large Entry-exit decisions with implementation delay under uncertainty}}
\author{\small Yong-Chao ZHANG\footnote{School of Mathematics and Statistics, Northeastern University at Qinhuangdao, Taishan Road 143, Qinhuangdao 066004, China. E-mail: zhangyc@neuq.edu.cn.}}
\date{}
\begin{document}

\maketitle
\vspace{-0.5cm}
\begin{abstract}
We employ a natural method from the perspective of the optimal stopping theory to analyze entry-exit decisions with implementation delay of a project, and provide closed expressions for optimal entry decision times, optimal exit decision times and the maximal expected present value from the project. The results in conventional research were obtained under the restriction that the sum of the entry cost and the exit cost is nonnegative. In practice, we usually meet this sum is negative, so it is necessary to remove the restriction. If the sum is negative, there may exist two price triggers of entry decision, which does not happen when the sum is nonnegative, and it is not optimal to enter and then immediately exit the project even though it is an arbitrage opportunity.\\
\noindent\textit{Key Words}: entry decision time; exit decision time; implementation delay; optimal stopping problem; viscosity solution\\
\noindent\textit{Mathematics Subject Classification (2010)}: 60G40; 91B06\\
\end{abstract}

\section{Introduction}\label{intro}

The background to entry-exit decisions is described as follows \cite{Zhang2015}. A firm has an option to invest in a project as well as stop it. To start the project activity, the firm needs an initial investment cost to produce a commodity at a running cost. In addition, it may stop the project at a terminal investment cost.

What time is optimal to decide to enter the project and what time is optimal to decide to exit the project? Many authors answered these two questions in the setting that there is no time lag between decision times and corresponding implementation times. For example, see \cite{Dix89,Shi97,SucZer2000,LumlZerv2001,Wang2005,Sodal2006,Tsek2010,Leven2005,BoyLev2007, Zhang2015}.

In practice, a major characteristic of investments is that there exist lags between decision times and corresponding implementation times. Some authors discussed entry-exit decision problems with implementation delay. For example, see \cite{BaSt1996, GautMore2000, Ok2005,CoScTa2008}.

In \cite{BaSt1996}, Bar-Ilan and Strange embedded lags in the classic model presented by Dixit \cite{Dix89}. They considered entry and exit decisions by employing the real option theory and derived a system of equations (see Equations (22)--(25) in \cite{BaSt1996}), then obtained semi-closed solutions for entry and exit decisions. However, they did not proved the existence and uniqueness of the solution to the system. Gauthier and Morellec \cite{GautMore2000} provided more explicit solutions through assuming \textit{a priori} the forms of decision times. In \cite{Ok2005}, {\O}ksendal studied two optimal exit decision problems with implementation delay---an assets selling problem and a resource extraction problem. In \cite{CoScTa2008}, Costeniuc \textit{et al.}~applied the probabilistic approach to entry and exit decisions with Parisian implementation delay from the view of real options.

The results in conventional research were obtained under the assumption that the sum of the entry cost and the exit cost is nonnegative. In practice, we usually meet this sum is negative. For example, an investor buys a project at a low price and then sells it at a high price. We will remove this assumption and study the case where the sum is negative.

If the sum is nonnegative, there exists no arbitrage opportunity, and there is only one price trigger of entry decision. However, if the sum is negative, it is an arbitrage opportunity to enter and then immediately exit the project, and there may be two price triggers of entry decision (\eqref{i:DoubEntrTrig} of Theorem \ref{t:EntrDeci}). We find that it is not optimal to enter and then immediately exit the project even if the sum is negative (see \eqref{i:Inst2} and \eqref{i:DoubEntrTrig} of Theorem \ref{t:EntrDeci}).

In this paper, we employ a method from the perspective of the optimal stopping theory, which proves to be natural, to rigorously discuss the entry-exit decision problem with implementation delay. We study this problem by three steps. First, we transform the delayed case into an instant case. Second, we decompose the instant case into two standard optimal stopping problems, and then solve these two problems. Finally, we provide explicitly an optimal entry decision time, an optimal exit decision time and an expression of the maximal expected present value from the project.

We outline the structure of this paper. In Section \ref{OptStop}, we recall briefly the classical optimal stopping theory. In Section \ref{Trans}, we show that delayed optimal stopping problems involving two stopping times can be transformed to instant ones. In Section \ref{Model}, we describe the model in detail. In Section \ref{EnExDe},  we obtain an optimal entry-exit decision as to when the firm decides to enter the project and when the firm decides to exit the project (Theorem \ref{t:optimalIOD}).

\section{Some results concerning classical optimal stopping problems}\label{OptStop}
In this section, we recall briefly some results of classical optimal stopping problems. For details, we refer to \cite[section 5.2]{Pham2009}.

Let $(\Omega, \mathscr{F}, \{\mathscr{F}_t\}_{t\geq 0}, \mathbb{P})$ be a filtered probability space with $\{\mathscr{F}_t\}_{t\geq 0}$ satisfying the usual conditions and $\mathscr{F}_0$ being the completion of $\{\emptyset, \Omega\}$. Let $B=(B(t), t\geq 0)$ be a $d$-dimensional standard Brownian motion defined on $(\Omega, \mathscr{F}, \{\mathscr{F}_t\}_{t\geq 0}, \mathbb{P})$.

Let $X=(X(t), t\geq 0)$ be a diffusion in $\mathbb{R}^n$ given by
\[
\mathrm{d}X(t)=\alpha(X(t)) \mathrm{d}t+\beta(X(t)) \mathrm{d}B(t), \;\; X(0)=x,
\]
where $\alpha: \mathbb{R}^n\rightarrow\mathbb{R}^n$ and $\beta: \mathbb{R}^n\rightarrow\mathbb{R}^{n\times
d}$ are some Lipschitz functions.

Let $\mathcal{T}$ denote the set of all stopping times valued in $[0,+\infty]$.

\begin{theorem}\label{t:ClasOpti}
Consider the following optimal stopping problem
\begin{equation}\label{e:ClasOpti}
V(x):=\sup_{\tau\in\mathcal{T}}\mathbb{E}^x\left[\int_0^{\tau}\exp(-rt)f(X(t))\mathrm{d}t+\exp(-r\tau)g(X(\tau))\right]
\end{equation}
for some Lipschitz functions $f$ and $g$. Here $\mathbb{E}^x[~\cdot~]:=\mathbb{E}[~\cdot~|X(0)=x]$, and $\exp(-r\tau)g(X(\tau))\equiv 0$ on $\{\tau=+\infty\}$.

Assume that $r>0$ is large enough. Then the following are true.
\begin{enumerate}[{\rm (i)}]

\item\label{i:ViscSolu} The value function $V$ is Lipschitz continuous and is the unique viscosity solution with linear growth of the variational inequality
\[
\min\{rV-\mathcal{L}V-f,V-g\}=0,
\]
where $\mathcal{L}$ is the infinitesimal generator of $X$.

\item\label{i:Maxi} Set $\mathcal{S}:=\{x:x\in\mathbb{R}^n, V(x)=g(x)\}$ which is called the exercise region. Then $\tau^*:=\inf\{t: t>0, X(t)\in \mathcal{S}\}$ is a maximizer of the problem \eqref{e:ClasOpti}.

\item\label{i:C2Cont} The value function $V$ is a viscosity solution of
\[
rV-\mathcal{L}V-f=0\qquad on\;\; \mathcal{C},
\]
where $\mathcal{C}:=\{x: x\in\mathbb{R}^n, V(x)>g(x)\}$ is the continuation region; moreover, if $\mathcal{L}$ is locally uniformly elliptic, $V$ is $C^2$ on $\mathcal{C}$.

\item\label{i:C1Cont} Assume that $X$ is 1-dimensional, $\mathcal{L}$ is locally uniformly elliptic, and $g$ is $C^1$ on $\mathcal{S}$. Then $V$ is $C^1$ on $\partial\mathcal{C}$ and $C^2$ at the isolated points of $\mathcal{S}$.

\item\label{i:StopImme} Define a function $\widehat{V}$ by
\[
\widehat{V}(x):=\mathbb{E}\left[\int_0^{+\infty}\exp(-rt)f(X(t))\mathrm{d}t\right].
\]
Then $\mathcal{S}=\emptyset$ implies $\widehat{V}\geq g$ and $\widehat{V}\geq g$ implies $V=\widehat{V}$.

\item\label{i:ExerSub} If $g$ is $C^2$ continuous on some open set $\mathcal{O}$, then $\mathcal{S}\subset \{x: x\in\mathcal{O}, rg(x)-\mathcal{L}g(x)-f(x)\geq 0\}\cup\mathcal{O}^\mathrm{c}$.

\item\label{i:ExerForm} Assume that $X$ is 1-dimensional and takes values in $(0,+\infty)$, $X(t, x)\rightarrow X(t, 0)=0$ as $x\rightarrow 0$, $\widehat{V}(x_0)<g(x_0)$ for some $x_0>0$, and $g$ is $C^2$ continuous. We have the following two facts. If $\mathcal{D}=[a,+\infty)$ for some $a>0$, where $\mathcal{D}:=\{x:x>0, rg(x)-\mathcal{L}g(x)-f(x)\geq 0\}$, then $\mathcal{S}=[x^*,+\infty)$ for some $x^*\in[a,+\infty)$. If $g(0)\geq f(0)/r$ and $\mathcal{D}=(0,a]$ for some $a>0$, then $\mathcal{S}=(0,x^*]$ for some $x^*\in(0,a]$.
\end{enumerate}
\end{theorem}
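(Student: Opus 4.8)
The plan is to read the statement as a compendium: import the classical core from \cite[Section 5.2]{Pham2009} and supply the extra arguments for the regularity and structural items. For \eqref{i:ViscSolu}, \eqref{i:Maxi} and the first half of \eqref{i:C2Cont}, taking $r$ large makes the reward functional finite with linear growth (since $f$ and $g$ are Lipschitz) and provides a comparison principle for the variational inequality; the dynamic programming principle then yields the viscosity characterisation, uniqueness in the class of functions with linear growth, the optimality of $\tau^*=\inf\{t>0:X(t)\in\mathcal S\}$, and the equation $rV-\mathcal LV-f=0$ in $\mathcal C$. These are verbatim in the reference. The genuinely extra assertion in \eqref{i:C2Cont} is interior $C^2$-regularity on $\mathcal C$ under local uniform ellipticity: this is Schauder regularity for the linear equation $rV-\mathcal LV=f$ with H\"older (indeed Lipschitz) coefficients, applied on balls compactly contained in $\mathcal C$.

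Item \eqref{i:C1Cont} is the classical principle of smooth fit in dimension one. At $x_0\in\partial\mathcal C$ the one-sided derivatives of $V-g$ are ordered because $V-g\ge 0$ attains its minimum value $0$ there; the reverse inequality is the smooth-fit lemma, proved by a local comparison (or It\^o--Tanaka) argument using $g\in C^1$ on $\mathcal S$, so $V\in C^1$ at $x_0$. At an isolated point of $\mathcal S$, $V$ is then $C^1$ and solves the same second-order linear ODE on both sides with matching value and first derivative, so by uniqueness for the ODE it is one and the same solution on both sides, hence $C^2$. Items \eqref{i:StopImme} and \eqref{i:ExerSub} are short verifications. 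If $\mathcal S=\emptyset$ then $\mathcal C=\mathbb R^n$, so by \eqref{i:Maxi} the rule $\tau^*=+\infty$ is optimal, $V=\widehat V$, and $\widehat V=V\ge g$; conversely, by Feynman--Kac (with $r$ large) $\widehat V$ has linear growth and solves $r\widehat V-\mathcal L\widehat V-f=0$, so $\widehat V\ge g$ gives $\min\{r\widehat V-\mathcal L\widehat V-f,\widehat V-g\}=0$ and the uniqueness in \eqref{i:ViscSolu} forces $V=\widehat V$. For \eqref{i:ExerSub}, if $x\in\mathcal S\cap\mathcal O$ the $C^2$ function $g$ touches $V$ from below at $x$ (because $V\ge g$ with equality at $x$), so the supersolution inequality at $x$ reads $rg(x)-\mathcal Lg(x)-f(x)\ge 0$.

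The substantive item is \eqref{i:ExerForm}. From $\widehat V(x_0)<g(x_0)$ and \eqref{i:StopImme} we get $\mathcal S\neq\emptyset$, and from \eqref{i:ExerSub} with $\mathcal O=(0,+\infty)$ we get $\mathcal S\subset\mathcal D$; since $\mathcal S$ is closed, $x^*:=\inf\mathcal S$ in the first case and $x^*:=\sup\mathcal S$ in the second both belong to $\mathcal S$. The engine is a maximum-principle dichotomy: on any component $(\ell,m)$ of $\mathcal C$ contained in $\mathcal D$, the function $w:=V-g$ satisfies $\mathcal Lw=rw-(rg-\mathcal Lg-f)\ge rw\ge 0$, so $w$ is $\mathcal L$-subharmonic; if $w$ vanishes at both endpoints — which it does whenever $0<\ell$ and $m<+\infty$, and also at the left end when $\ell=0$ in the second case, because $g(0)\ge f(0)/r$ is precisely the statement that $V(0)=g(0)$ — then the weak maximum principle contradicts $w>0$ inside. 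An elementary analysis of how $\mathcal C$ can decompose, using $\mathcal S\subset\mathcal D$ and $x^*\in\mathcal S$, reduces everything to this contradiction except, in the first case, for a possible component of the form $(\ell,+\infty)$ with $\ell\ge x^*$; for that one I would use the sharper fact that a subsolution of $\mathcal Lw-rw\ge 0$ cannot have an interior maximum where $w>0$, hence $w$ would be nondecreasing and positive on $(\ell,+\infty)$, and then the linear growth of $V$ from \eqref{i:ViscSolu} together with $r$ being large forces $\mathcal Lw-rw<0$ for large $x$, a contradiction. This yields $\mathcal S=[x^*,+\infty)$, respectively $\mathcal S=(0,x^*]$. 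The main obstacle I anticipate is exactly this last point — ruling out an unbounded continuation component by combining the refined maximum principle with the linear-growth bound and the largeness of $r$; everything else reduces to standard optimal-stopping theory and interior elliptic estimates.
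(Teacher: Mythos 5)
The paper itself offers no argument for this theorem: its proof consists of the single line referring to \cite[section 5.2]{Pham2009}, so there is nothing to match your write-up against step by step. What you have produced is a reconstruction of that reference plus supplementary arguments for the items that go slightly beyond it, and for \eqref{i:ViscSolu}--\eqref{i:ExerSub} your reconstruction is correct and follows the standard route: dynamic programming and a comparison principle for the variational inequality, interior Schauder estimates for $C^2$ regularity on $\mathcal{C}$, the smooth-fit lemma in dimension one together with Cauchy uniqueness for the ODE at isolated points of $\mathcal{S}$, the $\tau=0$ versus $\tau=+\infty$ verification for \eqref{i:StopImme}, and testing the supersolution property with $g$ itself for \eqref{i:ExerSub}.

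The one step that does not work as written is the last step of \eqref{i:ExerForm}, namely ruling out an unbounded component $(\ell,+\infty)$ of $\mathcal{C}$ when $\mathcal{D}=[a,+\infty)$. Your monotonicity argument for $w:=V-g$ is fine, but the concluding claim that linear growth of $V$ together with $r$ large ``forces $\mathcal{L}w-rw<0$ for large $x$'' is not a valid inference: a growth bound on $w$ gives no pointwise control on $w''$ and hence no pointwise sign for $\mathcal{L}w-rw$; the exclusion of nonnegative, nondecreasing subsolutions of linear growth must be a global argument, not a pointwise one. The standard fix is a Phragm\'en--Lindel\"of/comparison step: on $(\ell,R)$ compare $w$ with the solution $\varphi_R$ of $r\varphi-\mathcal{L}\varphi=0$ satisfying $\varphi_R(\ell)=0$ and $\varphi_R(R)=w(R)$, so that $\varphi_R=w(R)\,\psi/\psi(R)$ with $\psi$ the increasing fundamental solution vanishing at $\ell$; since $r$ is large, $\psi$ grows superlinearly (in the paper's application it is asymptotic to a multiple of $x^{\lambda_2}$ with $\lambda_2>1$), while $w(R)=O(R)$, so $\varphi_R(x)\rightarrow 0$ as $R\rightarrow+\infty$ for each fixed $x$ and hence $w\equiv 0$ on the component, the desired contradiction. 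Equivalently and probabilistically, $w(x)\leq\mathbb{E}^x\left[\exp(-r\tau)w(X(\tau))\right]$ for the exit time $\tau$ of $(\ell,R)$, and the contribution from the boundary point $R$ is $O\!\left(R\cdot(x/R)^{\lambda_2}\right)\rightarrow 0$. With this replacement your outline is complete and consistent with the cited source.
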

\begin{proof}
We refer to \cite[section 5.2]{Pham2009} for the proof.
\end{proof}

\section{A useful transformation}\label{Trans}
In this section, we show that delayed optimal stopping problems involving two stopping times can be transformed to instant ones. The proof is similar to that of \cite[p.~38, Theorem 2.11]{OkSu07}.

\begin{theorem}\label{t:Transf}
Let $\delta$ be a nonnegative number. Consider the following two optimal stopping problems
\begin{equation}\label{e:GenDel}
J(x):=\sup_{\tau_1,\tau_2\in\mathcal{T},\atop\tau_1\leq\tau_2}\mathbb{E}^x\left[\int_{\tau_1+\delta}^{\tau_2+\delta}f(X(t))\mathrm{d}t
+g_1(X(\tau_1+\delta))+g_2(X(\tau_2+\delta))\right],
\end{equation}
where $f,g_1,g_2: \mathbb{R}^n\rightarrow\mathbb{R}$ are three functions such that the expectations are finite;
\begin{equation}\label{e:GenIns}
\widetilde{J}(x):=\sup_{\tau_1,\tau_2\in\mathcal{T},\atop\tau_1\leq\tau_2}\mathbb{E}^x\left[\int_{\tau_1}^{\tau_2}f(X(t))\mathrm{d}t
+g_1^{\delta}(X(\tau_1))+g_2^{\delta}(X(\tau_2))\right],
\end{equation}
where
\[
g_1^{\delta}(x):=\mathbb{E}^x\left[-\int_{0}^{\delta}f(X(t))\mathrm{d}t+g_1(X(\delta))\right],
\]
and
\[
g_2^{\delta}(x):=\mathbb{E}^x\left[\int_{0}^{\delta}f(X(t))\mathrm{d}t+g_2(X(\delta))\right].
\]
Then $J(x)=\widetilde{J}(x)$. In addition, if $(\tau_1^*,\tau_2^*)$ is a maximizer of \eqref{e:GenIns}, it is also a maximizer of \eqref{e:GenDel}.
\end{theorem}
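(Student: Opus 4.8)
The plan is to prove the equality $J(x)=\widetilde{J}(x)$ by showing that for each admissible pair $(\tau_1,\tau_2)$ the two objective functionals coincide, so that taking suprema yields the result and carries maximizers along. Fix $x$ and a pair of stopping times $\tau_1\leq\tau_2$ in $\mathcal{T}$. The key idea is to insert the delay by shifting the stopping times: the map $(\tau_1,\tau_2)\mapsto(\tau_1+\delta,\tau_2+\delta)$ is a bijection between admissible pairs for \eqref{e:GenDel} and admissible pairs for \eqref{e:GenIns} (both preserve the ordering, both land in $\mathcal{T}$ because $\tau_i+\delta$ is still a stopping time for a fixed constant $\delta$), so it suffices to compare the integrand in \eqref{e:GenDel} evaluated at $(\tau_1,\tau_2)$ with the integrand in \eqref{e:GenIns} evaluated at the \emph{same} $(\tau_1,\tau_2)$, where in the latter $g_1^\delta,g_2^\delta$ already encode the delay.

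The heart of the argument is the strong Markov property. Conditioning on $\mathscr{F}_{\tau_1}$ and using the strong Markov property of $X$ at $\tau_1$,
\[
\mathbb{E}^x\bigl[g_1^\delta(X(\tau_1))\bigr]
=\mathbb{E}^x\left[\mathbb{E}^{X(\tau_1)}\Bigl[-\int_0^\delta f(X(t))\,\mathrm{d}t+g_1(X(\delta))\Bigr]\right]
=\mathbb{E}^x\left[-\int_{\tau_1}^{\tau_1+\delta} f(X(t))\,\mathrm{d}t+g_1(X(\tau_1+\delta))\right],
\]
and similarly, conditioning at $\tau_2$,
\[
\mathbb{E}^x\bigl[g_2^\delta(X(\tau_2))\bigr]
=\mathbb{E}^x\left[\int_{\tau_2}^{\tau_2+\delta} f(X(t))\,\mathrm{d}t+g_2(X(\tau_2+\delta))\right].
\]
Adding these to $\mathbb{E}^x\bigl[\int_{\tau_1}^{\tau_2} f(X(t))\,\mathrm{d}t\bigr]$, the three running-cost pieces telescope: $\int_{\tau_1}^{\tau_2}-\int_{\tau_1}^{\tau_1+\delta}+\int_{\tau_2}^{\tau_2+\delta}=\int_{\tau_1+\delta}^{\tau_2+\delta}$ (valid because $\tau_1\leq\tau_2$ so the intervals line up, and the identity $\int_{\tau_1}^{\tau_2}-\int_{\tau_1}^{\tau_1+\delta}=\int_{\tau_1+\delta}^{\tau_2}$ holds whether or not $\tau_1+\delta\leq\tau_2$ once one works with signed integrals). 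Hence the objective in \eqref{e:GenIns} at $(\tau_1,\tau_2)$ equals the objective in \eqref{e:GenDel} at $(\tau_1+\delta,\tau_2+\delta)$. Taking the supremum over all admissible $(\tau_1,\tau_2)$ and using the bijection gives $\widetilde{J}(x)=J(x)$, and the same identity shows that if $(\tau_1^*,\tau_2^*)$ attains the supremum in \eqref{e:GenIns}, then it attains it in \eqref{e:GenDel} as well.

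The main obstacle is a technical integrability/measurability point rather than a conceptual one: one must justify the interchange of $\mathbb{E}^x$ with the inner conditional expectation (Fubini/tower property) and make sense of the conditional expectation $\mathbb{E}^{X(\tau_1)}[\cdot]$ on the event $\{\tau_1=+\infty\}$, where by the stated convention the terminal terms vanish and the integrals over $[\tau_1,\tau_1+\delta]$ should be read as $0$. This is handled by the hypothesis that all expectations appearing in \eqref{e:GenDel} and \eqref{e:GenIns} are finite, together with the convention $\exp$-type terms vanish at $+\infty$; on $\{\tau_1=+\infty\}$ both objectives reduce to the same (possibly degenerate) expression, and on $\{\tau_1<+\infty\}$ the strong Markov property applies verbatim. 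The rest is the routine bookkeeping already carried out in \cite[p.~38, Theorem 2.11]{OkSu07}, which we follow.
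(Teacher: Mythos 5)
Your argument is correct and is essentially the same as the paper's: decompose $\int_{\tau_1+\delta}^{\tau_2+\delta}$ as $\int_{\tau_1}^{\tau_2}-\int_{\tau_1}^{\tau_1+\delta}+\int_{\tau_2}^{\tau_2+\delta}$ and apply the strong Markov property at $\tau_1$ and $\tau_2$ to identify the two objective functionals pair by pair, so the suprema and maximizers coincide. The ``bijection $(\tau_1,\tau_2)\mapsto(\tau_1+\delta,\tau_2+\delta)$'' framing is unnecessary (and slightly misleading, since your computation shows the two objectives agree at the \emph{same} pair, which is all that is needed); everything else matches the paper's proof.
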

\begin{proof}
1. Note that
\[
\begin{split}
\mathbb{E}^x&\left[\int_{\tau_1+\delta}^{\tau_2+\delta}f(X(t))\mathrm{d}t
+g_1(X(\tau_1+\delta))+g_2(X(\tau_2+\delta))\right]\\
&=\mathbb{E}^x\left[\left(\int_{\tau_1}^{\tau_2}-\int_{\tau_1}^{\tau_1+\delta}+\int_{\tau_2}^{\tau_2+\delta}\right)f(X(t))\mathrm{d}t\right.\\
&\qquad\qquad\qquad\left.+g_1(X(\tau_1+\delta))+g_2(X(\tau_2+\delta))\right]\\
&=\mathbb{E}^x\left[\int_{\tau_1}^{\tau_2}f(X(t))\mathrm{d}t
-\int_{\tau_1}^{\tau_1+\delta}f(X(t))\mathrm{d}t+g_1(X(\tau_1+\delta))\right.\\
&\qquad\qquad\qquad\qquad\qquad\left.+\int_{\tau_2}^{\tau_2+\delta}f(X(t))\mathrm{d}t+g_2(X(\tau_2+\delta))\right].
\end{split}
\]
Then, by the strong Markov property of the process $X$, we get
\[
\begin{split}
\mathbb{E}^x&\left[\int_{\tau_1+\delta}^{\tau_2+\delta}f(X(t))\mathrm{d}t
+g_1(X(\tau_1+\delta))+g_2(X(\tau_2+\delta))\right]\\
&=\mathbb{E}^x\left[\int_{\tau_1}^{\tau_2}f(X(t))\mathrm{d}t
+\mathbb{E}^{X(\tau_1)}\left[-\int_{0}^{\delta}f(X(t))\mathrm{d}t+g_1(X(\delta))\right]\right.\\
&\qquad\qquad\qquad\qquad\qquad\left.+\mathbb{E}^{X(\tau_2)}\left[\int_{0}^{\delta}f(X(t))\mathrm{d}t+g_2(X(\delta))\right]\right]\\
&=\mathbb{E}^x\left[\int_{\tau_1}^{\tau_2}f(X(t))\mathrm{d}t
+g_1^{\delta}(X(\tau_1))+g_2^{\delta}(X(\tau_2))\right],
\end{split}
\]
which completes the proof.
\end{proof}

\section{The model}\label{Model}
We return to the entry-exit decision problem introduced in Section \ref{intro}, and assume that the price process $P$ follows
\begin{equation}\label{e:price1D}
\text{$\mathrm{d}P(t)=\mu P(t)\mathrm{d}t+\sigma P(t)\mathrm{d}B(t)$ and $P(0)=p$,}
\end{equation}
where $\mu\in \mathbb{R}$, $\sigma,\,p>0$, and $B$ is a one dimensional standard Brownian motion, which denotes uncertainty.

Applying It\^{o}'s formula, we deduce that the solution of the equation (\ref{e:price1D}) is
\begin{equation}\label{e:price2D}
P(t)=P(0)\exp\left[\left(\mu-\frac{1}{2}\sigma^2\right)t+\sigma B(t)\right].
\end{equation}

To answer the two questions---what time is optimal to make an entry decision and what time is optimal to make an exit decision, we will solve the following optimal problem
\begin{equation}\label{e:optimalD}
\begin{aligned}
J(p):=\sup\limits_{\tau_I\leq\tau_O}\mathbb{E}^p\left[\int_{\tau_I+\delta}^{\tau_O+\delta}\exp(-r t)(P(t)-C)\mathrm{d}t \right.&-\exp(-r (\tau_I+\delta))K_I\\
&\left.-\exp(-r(\tau_O+\delta))K_O\right],
\end{aligned}
\end{equation}
where $\tau_I$ and $\tau_O$ are stopping times, $r$ is the discount rate such that $r>0$, $C$ is the running cost, $K_I$ is the entry cost, $K_O$ is the exit cost, and the nonnegative number $\delta$ is a time lag between the decision time and the corresponding implementation time. We call stopping times $\tau_I$ and $\tau_O$ an entry decision time and an exit decision time, respectively, and the function $J$ the maximal expected present value from the project.

\begin{remark}
\begin{enumerate}[(1)]
\item We do not propose any restriction on the running cost, entry cost and exit cost, except for constants.
\item Note that for any stopping time $\tau$ and nonnegative number $\delta$, $\tau+\delta$ is also a stopping time. The maximal expected present value $J$ of the delayed case is no more than that of the corresponding instant case. We may interpret their difference as the loss due to delayed implementation.
\item Furthermore, let $0\leq\delta_1<\delta_2<+\infty$ and $\mathcal{T}$ is the collection of all stopping times, then $\{\tau+\delta_2: \tau\in\mathcal{T}\}\subset\{\tau+\delta_1: \tau\in\mathcal{T}\}$, thus the value of $J$ corresponding to $\delta_2$ is no more than that corresponding to $\delta_1$. This implies the following principle: {\em once one has made a right decision, he/she should activate it as soon as possible}.
\end{enumerate}
\end{remark}

\section{An optimal entry-exit decision}\label{EnExDe}
In this section, we provide an optimal entry-exit decision and an explicit expression for the function $J$.

Let us first consider a simple case $r\leq\mu$. In this case, noting the expression \eqref{e:price2D} of $P$, we have
\begin{equation*}
\begin{aligned}
\mathbb{E}^p&\left[\int_{\delta}^{+\infty}\exp(-r t)(P(t)-C)\mathrm{d}t\right]\\
&=\int_{\delta}^{+\infty}\exp(-rt)\left(p\exp(\mu t)-C\right)\mathrm{d}t\\
&=\left\{
\begin{aligned}
&\lim\limits_{t\rightarrow+\infty}\left(p\,(t-\delta)+\frac{C}{r}\left(\exp(-rt)-\exp(-r\delta)\right)\right),\;\;\text{if}\; r=\mu,\\
&\lim\limits_{t\rightarrow+\infty}\left(\frac{p}{\mu-r}\left(\exp((\mu-r)t)-\exp((\mu-r)\delta)\right)\right.\\
&\qquad\qquad\left.+\frac{C}{r}\left(\exp(-rt)-\exp(-r\delta)\right)\right),\;\;\text{if}\; r<\mu
\end{aligned}\right.\\
&=+\infty,
\end{aligned}
\end{equation*}
where we have used the fact that the process
\[
\left(\exp\left(-\frac{1}{2}\sigma^2 t+\sigma B(t)\right), t\geq 0\right)
\] is a martingale (see \cite[p.~288, Corollary 5.2.2]{App2009}) for the first step.

Thus we obtain the following result.
\begin{theorem}\label{t:optimal1D}
Assume that $r\leq \mu$. Then $\tau_I^*:=0$ a.s.\ is an optimal entry decision time and $\tau_O^*:=+\infty$ is an optimal exit decision time, i.e., the firm should never exit the project. In addition, the function $J$ in \eqref{e:optimalD} is given by $J\equiv+\infty$.
\end{theorem}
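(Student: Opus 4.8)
The plan is to exhibit the candidate pair $(\tau_I^*,\tau_O^*)=(0,+\infty)$ as an admissible strategy whose expected payoff in \eqref{e:optimalD} already equals $+\infty$. Since $J(p)$ is defined as a supremum over admissible pairs $\tau_I\le\tau_O$ and nothing can exceed $+\infty$, it will follow at once that $J(p)=+\infty$ for every $p>0$ and that the pair attains this value, hence is a maximizer.

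First I would record admissibility: $\tau_I^*\equiv 0$ and $\tau_O^*\equiv+\infty$ both lie in $\mathcal T$ and satisfy $\tau_I^*\le\tau_O^*$, so they are eligible in \eqref{e:optimalD}. Inserting this pair, and using the convention (as in Theorem \ref{t:ClasOpti}) that $\exp(-r(\tau_O+\delta))K_O\equiv 0$ on $\{\tau_O=+\infty\}$, the objective collapses to
\[
\mathbb{E}^p\!\left[\int_{\delta}^{+\infty}\exp(-rt)\bigl(P(t)-C\bigr)\,\mathrm{d}t\right]-\exp(-r\delta)K_I .
\]
The next point is that this expectation is well defined and equals $+\infty$: the negative part of the integrand is dominated by $\exp(-rt)|C|$, whose integral over $[\delta,+\infty)$ is the finite constant $|C|\exp(-r\delta)/r$, so there is no $\infty-\infty$ ambiguity, and by Tonelli together with the identity $\mathbb{E}^p[P(t)]=p\,e^{\mu t}$ (a consequence of the martingale property of $\exp(-\tfrac12\sigma^2 t+\sigma B(t))$) its value is exactly the one obtained in the display preceding the theorem, namely $+\infty$. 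This is the only place the hypothesis $r\le\mu$ enters.

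Since $\exp(-r\delta)K_I$ is a finite constant, the objective at $(\tau_I^*,\tau_O^*)$ equals $+\infty$; hence $J(p)\ge+\infty$, i.e.\ $J(p)=+\infty$, for all $p>0$, which is the assertion $J\equiv+\infty$. Because the value attained at $(\tau_I^*,\tau_O^*)$ coincides with the supremum, this pair is optimal, so $\tau_I^*=0$ is an optimal entry decision time and $\tau_O^*=+\infty$ an optimal exit decision time. There is no substantive obstacle here — the main computation is already carried out in the excerpt — and the only points requiring care are the well-definedness of the expectation (handled by integrability of the negative part) and the correct reading of the terminal-cost term when $\tau_O=+\infty$.
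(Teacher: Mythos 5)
Your proposal is correct and follows essentially the same route as the paper: the paper's entire argument is the displayed computation preceding the theorem, showing $\mathbb{E}^p\left[\int_{\delta}^{+\infty}\exp(-rt)(P(t)-C)\,\mathrm{d}t\right]=+\infty$ via the martingale property of $\exp(-\tfrac12\sigma^2 t+\sigma B(t))$, after which the theorem is stated as an immediate consequence. Your additional remarks on integrability of the negative part and the reading of the terminal-cost term at $\tau_O=+\infty$ are sensible bookkeeping that the paper leaves implicit.
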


Now we determine an optimal entry-exit decision for the case $r>\mu$. To this end, we first employ Theorem \ref{t:Transf} to transform the delayed optimal stopping problem \eqref{e:optimalD} to an instant one.
\begin{theorem}\label{t:TranSpec}
The delayed optimal stopping problem \eqref{e:optimalD} is equivalent to the following optimal stopping problem
\begin{equation}\label{e:optimalIns}
\begin{split}
\widetilde{J}(p):=\sup\limits_{\tau_I\leq\tau_O}\mathbb{E}^p&\left[\int_{\tau_I}^{\tau_O}\exp(-r t)(P(t)-C)\mathrm{d}t\right.\\
&\quad\left.-\exp(-r\tau_I)(k_1P(\tau_I)+k_0)-\exp(-r \tau_O)(l_1P(\tau_O)+l_0)\right],
\end{split}
\end{equation}
where
\[
k_1:=\frac{\exp((\mu-r)\delta)-1}{\mu-r},\quad k_0:=\frac{C}{r}(\exp(-r\delta)-1)+\exp(-r\delta)K_I,
\]
\[
l_1:=-\frac{\exp((\mu-r)\delta)-1}{\mu-r},\quad l_0:=-\frac{C}{r}(\exp(-r\delta)-1)+\exp(-r\delta)K_O.
\]
\end{theorem}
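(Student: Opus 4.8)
The plan is to apply Theorem \ref{t:Transf} directly, with the identifications $f(x) = \exp(-rt)(x-C)$ — but here a subtlety already appears: the integrand in \eqref{e:optimalD} carries an explicit discount factor $\exp(-rt)$, so the process to which Theorem \ref{t:Transf} is applied must be the \emph{space-time} process $(t, P(t))$ rather than $P$ alone. First I would recast \eqref{e:optimalD} in the abstract form of \eqref{e:GenDel}: set $\widehat{f}(t,p) := \exp(-rt)(p-C)$, $\widehat{g}_1(t,p) := -\exp(-rt)K_I$, and $\widehat{g}_2(t,p) := -\exp(-rt)K_O$, with the diffusion being $X(t) = (t, P(t))$ (which is indeed a diffusion with Lipschitz coefficients on the relevant domain, and enjoys the strong Markov property). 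Then Theorem \ref{t:Transf} gives $J(p) = \widetilde{J}(p)$ where the transformed terminal payoffs are $\widehat{g}_i^{\delta}$, and a maximizer of the instant problem is a maximizer of the delayed one.

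The second step is the computation of $\widehat{g}_1^{\delta}$ and $\widehat{g}_2^{\delta}$, which is where all the constants $k_1, k_0, l_1, l_0$ come from. Starting from the state $(s, p)$ at time zero of the shifted clock, one has, by definition,
\[
\widehat{g}_1^{\delta}(s,p) = \mathbb{E}^{p}\!\left[-\int_0^{\delta}\exp(-r(s+t))(P(t)-C)\,\mathrm{d}t + \left(-\exp(-r(s+\delta))K_I\right)\right].
\]
Factoring out $\exp(-rs)$ and using $\mathbb{E}^p[P(t)] = p\exp(\mu t)$ (from \eqref{e:price2D} together with the martingale property already invoked in the paper), the integral evaluates to
\[
-\int_0^{\delta}\exp(-rt)\!\left(p\exp(\mu t)-C\right)\mathrm{d}t = -p\cdot\frac{\exp((\mu-r)\delta)-1}{\mu-r} + \frac{C}{r}\!\left(\exp(-r\delta)-1\right),
\]
so that $\widehat{g}_1^{\delta}(s,p) = \exp(-rs)\big(-k_1 p - k_0\big)$ with $k_1, k_0$ exactly as stated. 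The computation of $\widehat{g}_2^{\delta}$ is identical up to the sign of the integral term and the replacement of $K_I$ by $K_O$, giving $\widehat{g}_2^{\delta}(s,p) = \exp(-rs)\big(-l_1 p - l_0\big)$ — note that $l_1 = -k_1$, reflecting the sign flip. Substituting these back into the abstract instant problem \eqref{e:GenIns} and re-attaching the $\exp(-r\tau_I)$, $\exp(-r\tau_O)$ factors (which is what the $\exp(-rs)$ prefactor becomes when evaluated at the stopping times) yields precisely \eqref{e:optimalIns}.

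The main obstacle is not the algebra but making the space-time reduction rigorous: Theorem \ref{t:Transf} as stated is for a time-homogeneous diffusion $X$ in $\mathbb{R}^n$ with no explicit time-dependence in $f, g_1, g_2$, whereas our payoff has the discount factor $\exp(-rt)$ baked in. I would handle this by the standard device of appending the deterministic coordinate $X_0(t) = s + t$, checking that the strong Markov property and finiteness-of-expectation hypotheses of Theorem \ref{t:Transf} still hold (finiteness is where $r > \mu$ is used — it guarantees the relevant integrals converge, in contrast to the $r \le \mu$ case handled in Theorem \ref{t:optimal1D}), and observing that on $\{\tau_i = +\infty\}$ the convention $\exp(-r\tau_i)(\cdots) \equiv 0$ is consistent on both sides. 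Once the reduction is in place, the transfer of maximizers is immediate from the corresponding clause of Theorem \ref{t:Transf}. A final remark worth including is that $k_1 > 0$ and $l_1 < 0$ for $\delta > 0$ (and both vanish at $\delta = 0$, recovering the instant model), which foreshadows why the exit payoff, now linear in $P(\tau_O)$ with a positive coefficient $-l_1$, behaves differently from the classical constant-cost case.
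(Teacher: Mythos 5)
Your proposal is correct and follows essentially the same route as the paper: augment to the space--time process $X(t)=(s+t,\,P(t))$, apply Theorem \ref{t:Transf}, and compute the transformed payoffs explicitly via $\mathbb{E}^p[P(t)]=p\exp(\mu t)$ (the martingale property of geometric Brownian motion). The only blemish is a sign slip in your intermediate display --- the $C$-term of $-\int_0^\delta\exp(-rt)\left(p\exp(\mu t)-C\right)\mathrm{d}t$ is $-\frac{C}{r}\left(\exp(-r\delta)-1\right)$, not $+\frac{C}{r}\left(\exp(-r\delta)-1\right)$ --- which is inconsistent with your (correct) final identification $\widehat{g}_1^{\delta}(s,p)=\exp(-rs)\left(-k_1p-k_0\right)$ and should be fixed.
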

\begin{proof}
1. Define the process $X$ by $X(t):=\left[s+t, P(t)\right]^\text{T}$, where $s\in\mathbb{R}$. Then
\[
\mathrm{d}X(t)=\left[
\begin{array}{c}
1\\
\mu P(t)
\end{array}\right]\mathrm{d}t+\left[
\begin{array}{c}
0\\
\sigma P(t)
\end{array}\right]\mathrm{d}B(t),\;\,X(0)=\left[
\begin{array}{c}
s\\
p
\end{array}\right].
\]

\noindent 2. According to Theorem \ref{t:Transf}, we need to calculate
\begin{equation}\label{e:gIdel}
\mathbb{E}^p\left[-\int_{0}^{\delta}\exp(-r(s+t))\left(P(t)-C\right)\mathrm{d}t-\exp(-r(s+\delta))K_I\right]
\end{equation}
and
\begin{equation}\label{e:gOdel}
\mathbb{E}^p\left[\int_{0}^{\delta}\exp(-r(s+t))\left(P(t)-C\right)\mathrm{d}t-\exp(-r(s+\delta))K_O\right].
\end{equation}

For \eqref{e:gIdel}, we have
\[
\begin{aligned}
\mathbb{E}^p&\left[-\int_{0}^{\delta}\exp(-r(s+t))\left(P(t)-C\right)\mathrm{d}t-\exp(-r(s+\delta))K_I\right]\\
&=-\int_0^{\delta}\exp(-r(s+t))\left(p\exp(\mu t)-C\right)\mathrm{d}t-\exp(-r(s+\delta))K_I\\
&=-\exp(-rs)\left(\frac{p}{\mu-r}\left(\exp((\mu-r)\delta)-1\right)\right.\\
&\qquad\qquad\qquad\qquad\left.+\frac{C}{r}\left(\exp(-r\delta)-1\right)+\exp(-r\delta)K_I\right),
\end{aligned}
\]
where we have used the fact that the process
\[
\left(\exp\left(-\frac{1}{2}\sigma^2 t+\sigma B(t)\right), t\geq 0\right)
\] is a martingale (see \cite[p.~288, Corollary 5.2.2]{App2009}) for the first step.

Similarly, we can calculate \eqref{e:gOdel}. Therefore, in light of Theorem \ref{t:Transf}, the delayed optimal stopping problem \eqref{e:optimalD} is equivalent to the optimal stopping problem \eqref{e:optimalIns}.
\end{proof}

In order to solve the optimal stopping problem \eqref{e:optimalIns}, we will solve the following two the optimal stopping problems
\begin{equation}\label{e:ExitDeci}
G(p):=\sup\limits_{\tau_O}\mathbb{E}^p\left[\int_{0}^{\tau_O}\exp(-r t)(P(t)-C)\mathrm{d}t-\exp(-r\tau_O)(l_1P(\tau_O)+l_0)\right]
\end{equation}
and
\begin{equation}\label{e:EntrDeci}
H(p):=\sup\limits_{\tau_I}\mathbb{E}^p\left[\exp(-r\tau_I)\left(G(P(\tau_I))-k_1P(\tau_I)-k_0\right)\right].
\end{equation}

Assume that $r>\mu$. Let $\lambda_1$ and $\lambda_2$ be the solutions of the quadratic equation
\[
r-\mu\lambda-\frac{1}{2}\sigma^2\lambda(\lambda-1)=0
\]
with $\lambda_1<\lambda_2$. Then we have $\lambda_1<0$ and $\lambda_2>1$.

\begin{theorem}\label{t:ExitTrigger}
For the optimal stopping problem \eqref{e:ExitDeci}, the following are true.
\begin{enumerate}[{\rm (i)}]
\item If $r>\mu$ and $C\leq rK_O$, then $\tau_O^*:=+\infty$ a.s.\ is a maximizer of \eqref{e:ExitDeci}. In addition, $G(p)=p/(r-\mu)-C/r$.

\item If $r>\mu$ and $C>rK_O$, then $\tau_O^*:=\inf\{t: t>0, P(t)\leq p_O\}$ a.s.\ is a maximizer of \eqref{e:ExitDeci}, where
\[
p_O=\exp(-\mu\delta)\frac{\lambda_1}{\lambda_1-1}(r-\mu)\left(\frac{C}{r}-K_O\right).
\]
In addition,
\[
G(p)=\left\{
\begin{aligned}
&A p^{\lambda_1}+\frac{p}{r-\mu}-\frac{C}{r},\;\;\text{if}\;\, p>p_O,\\
&\frac{p}{\mu-r}\left(\exp((\mu-r)\delta)-1\right)\\
&\qquad\quad+\frac{C}{r}\left(\exp(-r\delta)-1\right)-\exp(-r\delta)K_O,\;\;\text{if}\;\, p\leq p_O,
\end{aligned}
\right.
\]
where $A=\exp((\mu-r) \delta){p_O}^{1-\lambda_1}/(\lambda_1(\mu-r))$.
\end{enumerate}
\end{theorem}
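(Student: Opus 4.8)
My plan is to read off \eqref{e:ExitDeci} as a one‑dimensional instance of the classical problem \eqref{e:ClasOpti}: take the underlying diffusion $X=P$ with infinitesimal generator $\mathcal{L}\phi(p)=\mu p\,\phi'(p)+\tfrac12\sigma^2p^2\phi''(p)$, running payoff $f(p):=p-C$, and terminal payoff $g(p):=-(l_1p+l_0)$, so that $G$ becomes exactly the value function $V$ of Theorem \ref{t:ClasOpti}. Because $r>\mu$, the auxiliary function $\widehat V$ of part \eqref{i:StopImme} is computed by the same martingale argument used just before Theorem \ref{t:optimal1D}:
\[
\widehat V(p)=\mathbb{E}^p\!\left[\int_0^{+\infty}\exp(-rt)(P(t)-C)\,\mathrm dt\right]=\frac{p}{r-\mu}-\frac{C}{r}.
\]
Substituting the explicit $l_0,l_1$ then produces the two sign expressions that drive the argument:
\[
\widehat V(p)-g(p)=\frac{\exp((\mu-r)\delta)}{r-\mu}\,p+\exp(-r\delta)\Bigl(K_O-\frac{C}{r}\Bigr),
\]
\[
rg(p)-\mathcal{L}g(p)-f(p)=-\exp((\mu-r)\delta)\,p+\exp(-r\delta)(C-rK_O).
\]

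Part (i): if $C\le rK_O$ the constant term of $\widehat V-g$ is nonnegative and its linear coefficient is positive, so $\widehat V>g$ on $(0,+\infty)$. By part \eqref{i:StopImme}, $\widehat V\ge g$ gives $V=\widehat V$, i.e.\ $G(p)=p/(r-\mu)-C/r$; and since then $V>g$ on all of $(0,+\infty)$, the process $P$ never enters the exercise region $\mathcal{S}$, so by part \eqref{i:Maxi} the maximiser is $\tau_O^*=\inf\{t>0:P(t)\in\mathcal{S}\}=+\infty$ a.s. (Equivalently, directly in the style of Theorem \ref{t:optimal1D}: for any $\tau_O$, writing $\int_0^{+\infty}=\int_0^{\tau_O}+\int_{\tau_O}^{+\infty}$ and using the strong Markov property shows the payoff of $\tau_O$ equals $\widehat V(p)-\mathbb{E}^p[\exp(-r\tau_O)(\widehat V-g)(P(\tau_O))]\le\widehat V(p)$, with equality at $\tau_O=+\infty$.)

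Part (ii): if $C>rK_O$ the second display shows $rg-\mathcal{L}g-f\ge0$ exactly on $(0,a]$ with $a:=\exp(-\mu\delta)(C-rK_O)>0$; also $g(0)=-l_0\ge-C/r=f(0)/r$, and by the first display $\widehat V(p)<g(p)$ for all small $p>0$ — all three from $C>rK_O$. The second alternative of part \eqref{i:ExerForm} then applies (its remaining hypotheses, $P$ valued in $(0,+\infty)$, $P(t,p)\to0$ as $p\to0$, and $g\in C^2$, are immediate), giving $\mathcal{S}=(0,x^*]$ for some $x^*\in(0,a]$ and hence $\tau_O^*=\inf\{t>0:P(t)\le x^*\}$ by part \eqref{i:Maxi}. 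On the continuation region $\mathcal{C}=(x^*,+\infty)$, parts \eqref{i:C2Cont}--\eqref{i:C1Cont} say $G\in C^2$ solves $rG-\mathcal{L}G-f=0$ and is $C^1$ across $x^*$; the general solution of that linear ODE is $Ap^{\lambda_1}+Bp^{\lambda_2}+p/(r-\mu)-C/r$, and linear growth of $G$ together with $\lambda_2>1$ forces $B=0$, while $G=g$ on $(0,x^*]$. Imposing value matching and first‑order smooth fit at $x^*$ gives a $2\times2$ system in $(x^*,A)$ which, after elementary algebra (using $l_1(\mu-r)=1-\exp((\mu-r)\delta)$ and $\exp(-r\delta)/\exp((\mu-r)\delta)=\exp(-\mu\delta)$), has the unique solution $x^*=p_O$ and $A=\exp((\mu-r)\delta)p_O^{\,1-\lambda_1}/(\lambda_1(\mu-r))$, which is the assertion.

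The argument is conceptually short; the care is in the checking. The step I expect to take most attention is verifying that the hypotheses of Theorem \ref{t:ClasOpti} really hold for $f(p)=p-C$ and $g(p)=-(l_1p+l_0)$, in particular those of the rather specialised part \eqref{i:ExerForm} and the standing requirement that ``$r$ be large enough'' (which in this linear‑payoff, geometric‑Brownian‑motion setting should be subsumed by $r>\mu$). It is worth noting, too, that uniqueness in part \eqref{i:ViscSolu} (or a one‑line verification) is what guarantees that the function assembled from the smooth‑fit solution is genuinely the value function, and this is what forces the smooth‑fit root automatically to lie in $(0,a]$. The sole computational step, solving the two smooth‑fit equations, is routine.
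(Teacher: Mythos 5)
Your proposal is correct and follows essentially the same route as the paper: part (i) via the $\widehat V\ge g$ criterion of \eqref{i:StopImme} of Theorem \ref{t:ClasOpti}, and part (ii) via computing $\mathcal{D}=(0,\exp(-\mu\delta)(C-rK_O)]$, invoking \eqref{i:ExerForm} to get an exercise region of the form $(0,p_O]$, solving the ODE on the continuation region with the growth condition killing the $p^{\lambda_2}$ term, and determining $p_O$ and $A$ by smooth fit. Your extra checks of the hypotheses of \eqref{i:ExerForm} (e.g.\ $g(0)\ge f(0)/r$ and $\widehat V<g$ near $0$) are details the paper leaves implicit, not a different argument.
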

\begin{proof}
1. Assume that $r>\mu$ and $C\leq rK_O$.

Noting that
\[
\mathbb{E}^p\left[\int_0^{+\infty}\exp(-rt)(P(t)-C)\right]=\frac{p}{r-\mu}-\frac{C}{r},
\]
we have
\[
\mathbb{E}^p\left[\int_0^{+\infty}\exp(-rt)(P(t)-C)\right]\geq -l_1p-l_0.
\]

Therefore, by \eqref{i:StopImme} of Theorem \ref{t:ClasOpti}, we achieve (1).

\noindent 2. Assume that $r>\mu$ and $C>rK_O$.

In this case, we have $\mathcal{D}=(0,\exp(-\mu\delta)(C-rK_O)]$. Thus, by \eqref{i:ExerForm} of Theorem \ref{t:ClasOpti}, the exercise region is of the form $(0, p_O]$ for some $p_O\in(0,+\infty)$. On the continuation region $(p_O,+\infty)$, $G$ satisfies the equation
\[
rG-\mu p G'-\frac{1}{2}\sigma^2p^2G''-p+C=0
\]
by \eqref{i:C2Cont} of Theorem \ref{t:ClasOpti}. Furthermore, by the Lipschitz property of $G$, we have
\[
G(p)=Ap^{\lambda_1}+\frac{p}{r-\mu}-\frac{C}{r}
\]
for some constant $A$.

Note that $G$ is $C^1$ continuous at $p_O$ by \eqref{i:C1Cont} of Theorem \ref{t:ClasOpti}. We get the following system
\[\left\{
\begin{aligned}
&Ap_O^{\lambda_1}+\frac{p_O}{r-\mu}-\frac{C}{r}=\frac{p_O}{\mu-r}\left(\exp((\mu-r)\delta)-1\right)\\
&\qquad\qquad\qquad\qquad\qquad\qquad+\frac{C}{r}\left(\exp(-r\delta)-1\right)-\exp(-r\delta)K_O\\
&\lambda_1A p_O^{\lambda_1-1}+\frac{1}{r-\mu}=\frac{\exp((\mu-r)\delta)-1}{\mu-r},
\end{aligned}\right.
\]
from which we obtain
\begin{equation}\label{e:pO}
p_O=\exp(-\mu \delta)\frac{\lambda_1}{\lambda_1-1}(r-\mu)\left(\frac{C}{r}-K_O\right)
\end{equation}
and
\[
A=\exp((\mu-r) \delta)\frac{{p_O}^{1-\lambda_1}}{\lambda_1(\mu-r)}.
\]
The proof is complete.
\end{proof}

\begin{remark}
We will prove in Theorem \ref{t:optimalIOD} that $p_O$ is the price trigger of exit decision.
\end{remark}

\begin{corollary}\label{c:ExTrUpBo}
The optimal exit trigger price $p_O$ in Theorem {\rm\ref{t:ExitTrigger}} satisfies $p_O<\exp(-\mu\delta)\left(C-rK_O\right)$.
\end{corollary}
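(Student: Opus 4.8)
The plan is to start from the explicit formula for $p_O$ in part~(ii) of Theorem~\ref{t:ExitTrigger} and reduce the claimed bound to an elementary fact about the negative root $\lambda_1$ of the characteristic quadratic. First I would rewrite
\[
p_O=\exp(-\mu\delta)\,\frac{\lambda_1}{\lambda_1-1}\cdot\frac{(r-\mu)(C-rK_O)}{r},
\]
using $(r-\mu)\bigl(C/r-K_O\bigr)=(r-\mu)(C-rK_O)/r$. Since we are in the regime $r>\mu$ and $C>rK_O$, both $\exp(-\mu\delta)$ and $C-rK_O$ are strictly positive, so dividing the desired inequality $p_O<\exp(-\mu\delta)(C-rK_O)$ by these two factors shows it is equivalent to
\[
\frac{\lambda_1}{\lambda_1-1}\cdot\frac{r-\mu}{r}<1.
\]

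Next I would clear denominators, tracking signs carefully. Recall $\lambda_1<0$, hence $\lambda_1-1<0$; since also $r>0$, multiplying through by $r$ gives $\lambda_1(r-\mu)/(\lambda_1-1)<r$, and then multiplying by $\lambda_1-1<0$ (which reverses the inequality) turns the target into $\lambda_1(r-\mu)>r(\lambda_1-1)$. Cancelling $r\lambda_1$ from both sides, this is simply $\mu\lambda_1<r$, i.e.\ $r-\mu\lambda_1>0$.

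Finally I would invoke the defining equation of $\lambda_1$: from $r-\mu\lambda_1-\tfrac12\sigma^2\lambda_1(\lambda_1-1)=0$ we obtain the identity $r-\mu\lambda_1=\tfrac12\sigma^2\lambda_1(\lambda_1-1)$, whose right-hand side is strictly positive because $\sigma>0$ and $\lambda_1<0$ force $\lambda_1(\lambda_1-1)>0$. Hence $r-\mu\lambda_1>0$ and the corollary follows. There is no genuine obstacle here; the only point requiring care is the sign reversal when multiplying by $\lambda_1-1<0$, together with the observation that the resulting inequality is strict (since $\sigma>0$ and $\lambda_1\neq 0$). Note also that one cannot simply bound the two factors $\lambda_1/(\lambda_1-1)\in(0,1)$ and $(r-\mu)/r$ separately, because when $\mu<0$ the latter exceeds $1$; it is precisely the quadratic identity that makes the argument work in that case.
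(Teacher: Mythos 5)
Your proof is correct and follows essentially the same route as the paper: the paper's one-line justification ``$1/\lambda_1<\mu/r$'' is, after rearrangement, exactly your inequality $r-\mu\lambda_1>0$, which both arguments obtain from the defining quadratic $r-\mu\lambda_1-\tfrac12\sigma^2\lambda_1(\lambda_1-1)=0$ together with $\lambda_1<0$. You simply spell out the sign-tracking that the paper leaves implicit.
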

\begin{proof}
Note that $1/\lambda_1<\mu/r$. Then, thanks to \eqref{e:pO}, the conclusion follows.
\end{proof}

\begin{theorem}\label{t:EntrDeci}
For the optimal stopping problem \eqref{e:EntrDeci}, the following are true.
\begin{enumerate}[{\rm (i)}]
\item If $r>\mu$, $C-rK_O\leq 0$ and $C+rK_I\leq 0$, then $\tau_I^*:=0$ a.s.\ is a maximizer of \eqref{e:EntrDeci}. In addition,
\[
H(p)=\frac{\exp((\mu-r)\delta)}{r-\mu}p-\exp(-r\delta)\left(\frac{C}{r}+K_I\right).
\]

\item\label{i:EntrTrig1} If $r>\mu$, $C-rK_O\leq 0$ and $C+rK_I> 0$, then $\tau_I^*:=\inf\{t: t>0, P(t)\geq p_I\}$ a.s.\ is a maximizer of \eqref{e:EntrDeci}, where
\[
p_I=\exp(-\mu \delta)\frac{\lambda_2}{\lambda_2-1}(r-\mu)\left(\frac{C}{r}+K_I\right).
\]
In addition,
\[
H(p)=\left\{
\begin{aligned}
&B p^{\lambda_2},\;\;\text{if}\;\, p<p_I,\\
&\frac{\exp((\mu-r)\delta)}{r-\mu}p-\exp(-r\delta)\left(\frac{C}{r}+K_I\right),\;\;\text{if}\;\, p\geq p_I,
\end{aligned}
\right.
\]
where $B=\exp((\mu-r) \delta){p_I}^{1-\lambda_2}/(\lambda_2(r-\mu))$.

\item\label{i:Inst2} If $r>\mu$, $C-rK_O>0$ and $C+rK_I\leq 0$, then $\tau_I^*:=0$ a.s.\ is a maximizer of \eqref{e:EntrDeci}.
In addition,
\[
H(p)=\left\{
\begin{aligned}
&A p^{\lambda_1}+\frac{\exp((\mu-r)\delta)}{r-\mu}p-\exp(-r\delta)\left(\frac{C}{r}+K_I\right),\;\;\text{if}\;\, p>p_O,\\
&-\exp(-r\delta)(K_I+K_O),\;\;\text{if}\;\, p\leq p_O.
\end{aligned}
\right.
\]

\item\label{i:EntrTrig2} If $r>\mu$, $C-rK_O>0$, $C+rK_I> 0$ and $K_I+K_O\geq0$, then $\tau_I^*:=\inf\{t: t>0, P(t)\geq p_I\}$ a.s.\ is a maximizer of \eqref{e:EntrDeci}, where $p_I$ is the largest solution of the algebraic equation
\[
A(\lambda_2-\lambda_1)p_I^{\lambda_1}+\frac{\exp((\mu-r)\delta)}{r-\mu}(\lambda_2-1)p_I-\lambda_2\exp(-r\delta)\left(\frac{C}{r}+K_I\right)=0.
\]
In addition,
\[
H(p)=\left\{
\begin{aligned}
&B p^{\lambda_2},\;\;\text{if}\;\, p<p_I,\\
&Ap^{\lambda_1}+\frac{\exp((\mu-r)\delta)}{r-\mu}p-\exp(-r\delta)\left(\frac{C}{r}+K_I\right),\;\;\text{if}\;\, p\geq p_I,
\end{aligned}
\right.
\]
where
\[
B=\lambda_1\lambda_2^{-1}Ap_I^{\lambda_1-\lambda_2}+\exp((\mu-r) \delta)\frac{{p_I}^{1-\lambda_2}}{\lambda_2(r-\mu)}.
\]

\item If $r>\mu$, $C-rK_O> 0$, $C+rK_I>0$, $K_I+K_O<0$ and $p_O\geq\exp(-\mu\delta)(C+rK_I)$, then $\tau_I^*:=0$ a.s.\ is a maximizer of \eqref{e:EntrDeci}. In addition,
\[
H(p)=\left\{
\begin{aligned}
&A p^{\lambda_1}+\frac{\exp((\mu-r)\delta)}{r-\mu}p-\exp(-r\delta)\left(\frac{C}{r}+K_I\right),\;\;\text{if}\;\, p>p_O,\\
&-\exp(-r\delta)(K_I+K_O),\;\;\text{if}\;\, p\leq p_O.
\end{aligned}
\right.
\]

\item\label{i:DoubEntrTrig} If $r>\mu$, $C-rK_O> 0$, $C+rK_I>0$, $K_I+K_O<0$ and $p_O<\exp(-\mu\delta)(C+rK_I)$, then $\tau_I^*:=\inf\{t: t>0, P(t)\leq p_I^{(1)}\ \text{or}\ P(t)\geq p_I^{(2)}\}$ a.s.\ is a maximizer of \eqref{e:EntrDeci}, where $(p_I^{(1)}, p_I^{(2)})$ is the solution of the equation
\begin{equation}\label{e:pI1pI2}
\begin{split}
&\left[
\begin{array}{cc}
\lambda_2{p_I^{(1)}}^{-\lambda_1}&-{p_I^{(1)}}^{-\lambda_1}\\
-\lambda_1{p_I^{(1)}}^{-\lambda_2}&{p_I^{(1)}}^{-\lambda_2}
\end{array}
\right]
\left[
\begin{array}{c}
-\exp(-r\delta)(K_I+K_O)\\
0
\end{array}
\right]\\
&\quad=\left[
\begin{array}{cc}
\lambda_2{p_I^{(2)}}^{-\lambda_1}&-{p_I^{(2)}}^{-\lambda_1}\\
-\lambda_1{p_I^{(2)}}^{-\lambda_2}&{p_I^{(2)}}^{-\lambda_2}
\end{array}
\right]
\left[
\begin{array}{c}
\begin{split}A{p_I^{(2)}}^{\lambda_1}&+\frac{\exp((\mu-r)\delta)}{r-\mu}p_I^{(2)}\\
&-\exp(-r\delta)\left(\frac{C}{r}+K_I\right)\end{split}\\
\lambda_1A{p_I^{(2)}}^{\lambda_1}+\frac{\exp((\mu-r)\delta)}{r-\mu}p_I^{(2)}
\end{array}
\right]
\end{split}
\end{equation}
with $p_I^{(1)}<p_I^{(2)}$.

In addition,
\[
H(p)=\left\{
\begin{aligned}
&A p^{\lambda_1}+\frac{\exp((\mu-r)\delta)}{r-\mu}p-\exp(-r\delta)\left(\frac{C}{r}+K_I\right),\;\;\text{if}\;\, p\geq p_I^{(2)},\\
&B_1p^{\lambda_1}+B_2p^{\lambda_2},\;\;\text{if}\;\, p_I^{(1)}< p< p_I^{(2)},\\
&-\exp(-r\delta)(K_I+K_O),\;\;\text{if}\;\, p\leq p_I^{(1)},
\end{aligned}
\right.
\]
where
\[
B_1=-\frac{\lambda_2{p_I^{(1)}}^{-\lambda_1}\exp(-r\delta)}{\lambda_2-\lambda_1}(K_I+K_O)
\]
and
\[
B_2=\frac{\lambda_1{p_I^{(1)}}^{-\lambda_2}\exp(-r\delta)}{\lambda_2-\lambda_1}(K_I+K_O).
\]
\end{enumerate}
\end{theorem}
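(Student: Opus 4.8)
Throughout, $r>\mu$, and $\lambda_1<0<1<\lambda_2$ are the roots introduced before the statement. The plan is to read \eqref{e:EntrDeci} as a standard optimal stopping problem of the type covered by Theorem \ref{t:ClasOpti}, with underlying one--dimensional diffusion $P$ (so $\mathcal{L}\varphi=\mu p\varphi'+\frac{1}{2}\sigma^2p^2\varphi''$ on $(0,+\infty)$, locally uniformly elliptic there), with \emph{no} running reward ($f\equiv 0$, so the auxiliary function $\widehat V$ of \eqref{i:StopImme} is $\widehat V\equiv 0$), and with terminal reward $g(p):=G(p)-k_1p-k_0$. So the first step is to make $g$ explicit: substituting the two forms of $G$ from Theorem \ref{t:ExitTrigger} (a short computation; on the stopping branch of $G$ the linear term cancels $k_1p$ exactly), one finds that if $C\le rK_O$ then
\[
g(p)=\frac{\exp((\mu-r)\delta)}{r-\mu}\,p-\exp(-r\delta)\Big(\frac{C}{r}+K_I\Big),\qquad p>0,
\]
while if $C>rK_O$ then, with $A$, $p_O$ as in Theorem \ref{t:ExitTrigger},
\[
g(p)=\begin{cases}
Ap^{\lambda_1}+\dfrac{\exp((\mu-r)\delta)}{r-\mu}\,p-\exp(-r\delta)\Big(\dfrac{C}{r}+K_I\Big),& p>p_O,\\[3mm]
-\exp(-r\delta)(K_I+K_O),& p\le p_O,
\end{cases}
\]
and $g$ is $C^1$ at $p_O$ with $g'(p_O^+)=0$ and, since $A>0$ and $\lambda_1(\lambda_1-1)>0$, with $g''(p_O^+)>0$. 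Using $\mathcal{L}p^{\lambda_i}=rp^{\lambda_i}$, $\mathcal{L}p=\mu p$, $\mathcal{L}1=0$, one then computes: on the affine/power branch $rg-\mathcal{L}g-f=\exp((\mu-r)\delta)\,p-r\exp(-r\delta)(C/r+K_I)$, which is $\ge 0$ iff $p\ge a^{*}:=\exp(-\mu\delta)(C+rK_I)$; on the constant branch $rg-\mathcal{L}g-f=-r\exp(-r\delta)(K_I+K_O)$, of sign opposite to that of $K_I+K_O$. These identities, together with Corollary \ref{c:ExTrUpBo} ($p_O<\exp(-\mu\delta)(C-rK_O)$), drive every case.

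For the three ``stop immediately'' items (i), \eqref{i:Inst2} and (v), the plan is to show that $g$ \emph{itself} is the unique viscosity solution with linear growth of $\min\{rV-\mathcal{L}V-f,\,V-g\}=0$ from \eqref{i:ViscSolu}: $V-g\ge 0$ is trivial, and $rg-\mathcal{L}g-f\ge 0$ on $(0,+\infty)$ follows from the hypotheses --- in (i) from $C+rK_I\le 0$ (no $p_O$, as $C\le rK_O$); in \eqref{i:Inst2} from $C+rK_I\le 0$ together with $C>rK_O$, which forces $K_I+K_O<0$; in (v) from $K_I+K_O<0$ and the standing hypothesis $p_O\ge a^{*}$. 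When $C>rK_O$ there remains the single point $p_O$ where $g$ is only $C^1$: there one checks the viscosity supersolution property by hand, using that near $p_O$ we have $g=\min\{g_L,g_R\}$ with $g_L$ constant and $g_R$ convex with $g_R'(p_O)=0$, so every $C^2$ function below $g$ touching it at $p_O$ has nonpositive second derivative there. Uniqueness then gives $H=g$, and since the exercise region $\mathcal S=\{V=g\}$ is all of $(0,+\infty)$, $\tau_I^{*}\equiv 0$ is a maximizer by \eqref{i:Maxi}. For the ``single upper trigger'' items \eqref{i:EntrTrig1} and \eqref{i:EntrTrig2}: $g(x_0)>0=\widehat V(x_0)$ for large $x_0$, so $\mathcal S\ne\emptyset$ by \eqref{i:StopImme}; the set $\mathcal D=\{rg-\mathcal{L}g-f\ge 0\}$ equals $[a^{*},+\infty)$ (in \eqref{i:EntrTrig2} the branch $(0,p_O]$ drops out since $K_I+K_O\ge 0$, and $a^{*}>p_O$ by Corollary \ref{c:ExTrUpBo} and $K_I+K_O\ge0$, while $p_O\notin\mathcal S$ follows from comparing $H''$ and $g''$ at $p_O$), so \eqref{i:ExerSub}--\eqref{i:ExerForm} (together with the linear growth of $H$, which supplies the structure where $g$ is only $C^1$ at $p_O$) give $\mathcal S=[p_I,+\infty)$ with $p_I\ge a^{*}>0$. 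On $\mathcal C=(0,p_I)$, \eqref{i:C2Cont} gives $rH-\mathcal{L}H=0$, hence $H=Bp^{\lambda_2}$ (linear growth discards $p^{\lambda_1}$); the $C^1$ smooth fit of \eqref{i:C1Cont} at $p_I$ then yields the stated $B$ and $p_I$ --- explicitly in \eqref{i:EntrTrig1}, and in \eqref{i:EntrTrig2} by eliminating $B$ to get the displayed algebraic equation, of which one takes the largest root (the crossing of $Bp^{\lambda_2}$ and $g$ that bounds $\mathcal C$ from above); a substitution back into the variational inequality confirms these are the value functions and selects that root.

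The new phenomenon is \eqref{i:DoubEntrTrig}, and this is where the work lies. Under its hypotheses $K_I+K_O<0$, so $g\equiv-\exp(-r\delta)(K_I+K_O)>0$ on $(0,p_O]$ while $C+rK_I>0$ makes the power branch eventually exceed that value; the decisive assumption $p_O<a^{*}$ makes
\[
\mathcal D=\{p>0:\ rg(p)-\mathcal{L}g(p)-f(p)\ge 0\}=(0,p_O]\cup[a^{*},+\infty)
\]
\emph{disconnected}, which is exactly what produces two entry triggers. Then: by \eqref{i:ExerSub}, $\mathcal S\subset(0,p_O]\cup[a^{*},+\infty)$, so $(p_O,a^{*})\subset\mathcal C$; since $rH-\mathcal{L}H=0$ forces $H=B_1p^{\lambda_1}+B_2p^{\lambda_2}$ on every component of $\mathcal C$, and such a function has a strictly monotone first derivative (hence cannot fit a flat boundary at two distinct points) and its differences with the relevant branches of $g$ have at most three zeros, while the linear growth of $H$ rules out an unbounded component and, with $g(0^+)>0$, a component adjacent to $0$, one shows $\mathcal C=(p_I^{(1)},p_I^{(2)})$ is a single interval and $\mathcal S=(0,p_I^{(1)}]\cup[p_I^{(2)},+\infty)$ with $0<p_I^{(1)}\le p_O<a^{*}\le p_I^{(2)}$. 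A count of the free--boundary conditions shows $p_I^{(1)}<p_O$ strictly, so $H=-\exp(-r\delta)(K_I+K_O)$ on $(0,p_I^{(1)}]$, $H=B_1p^{\lambda_1}+B_2p^{\lambda_2}$ on $\mathcal C$, and $H$ equals the power branch of $g$ on $[p_I^{(2)},+\infty)$. Imposing $C^1$ smooth fit (from \eqref{i:C1Cont}) at $p_I^{(1)}$ and at $p_I^{(2)}$ gives four equations in $B_1,B_2,p_I^{(1)},p_I^{(2)}$: the pair at $p_I^{(1)}$, where $g$ has value $-\exp(-r\delta)(K_I+K_O)$ and slope $0$, solves linearly for $B_1,B_2$ and yields exactly the stated formulas, and equating with the pair obtained at $p_I^{(2)}$ produces the matrix equation \eqref{e:pI1pI2}.

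The hard part will be to prove that \eqref{e:pI1pI2} has exactly one admissible solution $p_I^{(1)}<p_I^{(2)}$ and that the candidate it builds really is the value function. I plan a one--parameter shooting argument: for $q\in(0,p_O]$ let $B_1(q),B_2(q)$ be determined by the value/slope conditions at $q$, put $h_q(p):=B_1(q)p^{\lambda_1}+B_2(q)p^{\lambda_2}$, and let $q^{\sharp}(q)\ge a^{*}$ be the first point at which $h_q$ meets the power branch of $g$ tangentially; the resulting ``$C^1$--mismatch at $q^{\sharp}(q)$'' is a continuous scalar function of $q$ whose sign one controls at the two ends of $(0,p_O]$ using $p_O<a^{*}$, $g''(p_O^+)>0$ and the growth of the power branch, so the intermediate value theorem produces $q=p_I^{(1)}$ and $p_I^{(2)}=q^{\sharp}(q)$, uniqueness coming from the monotonicity of the mismatch. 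Once existence is in hand the rest is routine verification: the candidate $H$ so constructed is $C^1$ on $(0,+\infty)$, is $C^2$ off $\{p_I^{(1)},p_I^{(2)}\}$, satisfies $rH-\mathcal{L}H-f\ge 0$ on $\mathcal S$ (because $\mathcal S\subset\mathcal D$) and $H\ge g$ on $\mathcal C$ (from the boundary data and the convexity of $h_q-g$ there), and has linear growth, hence is \emph{the} viscosity solution of the variational inequality by \eqref{i:ViscSolu}; then \eqref{i:Maxi} identifies $\tau_I^{*}$ exactly as stated.
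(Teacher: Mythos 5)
Your proposal follows essentially the same route as the paper: it treats \eqref{e:EntrDeci} as a standard stopping problem with reward $g=G-k_1p-k_0$, verifies $g$ itself as the (unique, linear-growth) viscosity solution of the variational inequality in the stop-immediately cases (handling the kink at $p_O$ by a hand-made test-function argument, exactly as the paper does), and in the trigger cases computes $\mathcal{D}$, pins down the shape of $\mathcal{S}$ via \eqref{i:ExerSub}--\eqref{i:ExerForm} of Theorem \ref{t:ClasOpti}, solves $rH-\mathcal{L}H=0$ on the continuation region and imposes $C^1$ smooth fit, arriving at the same systems and the same matrix equation \eqref{e:pI1pI2}. The only divergences are minor: in case \eqref{i:EntrTrig2} the paper selects the largest root and excludes $p_O$ from $\mathcal{S}$ via Lemma \ref{l:LargSolu} together with an explicit check that $H(p_O)>G(p_O)-k_1p_O-k_0$, where you are sketchier, while your proposed shooting argument for solvability of \eqref{e:pI1pI2} in case \eqref{i:DoubEntrTrig} supplies an existence/uniqueness step that the paper in fact omits.
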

\begin{proof}
1. Assume that $r>\mu$, $C-rK_O\leq 0$ and $C+rK_I\leq 0$.

Define a function $w$ by
\[
w(p):=\frac{\exp((\mu-r)\delta)}{r-\mu}p-\exp(-r\delta)\left(\frac{C}{r}+K_I\right)\qquad {\rm for}\;p\in(0,+\infty).
\]
Then we have
\[
rw(p)-\mu p w'(p)-\frac{1}{2}\sigma^2p^2w''(p)\geq 0,
\]
which implies $w$ is a viscosity solution of
\[
\min\{rV-\mu p V'-\frac{1}{2}\sigma^2p^2V'',V-w\}=0\qquad{\rm on}\; (0,+\infty).
\]

Note that $H(0^+)=w(0^+)$. Thus, by uniqueness of viscosity solutions (see \eqref{i:ViscSolu} of Theorem \ref{t:ClasOpti}), we have $H(p)=w(p)$. Consequently, the exercise region is of the $(0,+\infty)$, i.e., $\tau_I^*:=0$ a.s.\ is a maximizer of \eqref{e:EntrDeci} by \eqref{i:Maxi} of Theorem \ref{t:ClasOpti}.

\noindent 2. Assume that $r>\mu$, $C-rK_O\leq 0$ and $C+rK_I> 0$.

In this case, we have $\mathcal{D}=[\exp(-\mu\delta)(C+rK_I),+\infty)$. Thus, by \eqref{i:ExerForm} of Theorem \ref{t:ClasOpti}, the exercise region is of the form $[p_I,+\infty)$ for some $p_I\in(0,\infty)$. On the continuation region $(0,p_I)$, $H$ satisfies the equation
\[
rH-\mu p H'-\frac{1}{2}\sigma^2p^2H''=0
\]
by (7) of Theorem \ref{t:ClasOpti}. Furthermore, by the Lipschitz property of $H$, we have $H(p)=Bp^{\lambda_2}$ for some constant $B$.

Note that $H$ is $C^1$ continuous at $p_I$ by \eqref{i:C1Cont} of Theorem \ref{t:ClasOpti}. We get the following system
\[\left\{
\begin{aligned}
&Bp_I^{\lambda_2}=\frac{\exp((\mu-r)\delta)}{r-\mu}p_I-\exp(-r\delta)\left(\frac{C}{r}+K_I\right)\\
&\lambda_2B p_I^{\lambda_2-1}=\frac{\exp((\mu-r)\delta)}{r-\mu},
\end{aligned}\right.
\]
from which we obtain
\[
p_I=\exp(-\mu \delta)\frac{\lambda_2}{\lambda_2-1}(r-\mu)\left(\frac{C}{r}+K_I\right)
\]
and
\[
B=\exp((\mu-r) \delta)\frac{{p_I}^{1-\lambda_2}}{\lambda_2(r-\mu)}.
\]

\noindent 3. Assume that $r>\mu$, $C-rK_O>0$ and $C+rK_I\leq 0$.

Define a function $w$ by
\[
w(p):=\left\{
\begin{aligned}
&A p^{\lambda_1}+\frac{\exp((\mu-r)\delta)}{r-\mu}p-\exp(-r\delta)\left(\frac{C}{r}+K_I\right),\;\;\text{if}\;\, p>p_O,\\
&-\exp(-r\delta)(K_I+K_O),\;\;\text{if}\;\, p\leq p_O.
\end{aligned}
\right.
\]
Then $w$ is a viscosity subsolution of
\begin{equation}\label{e:SubSolu1}
\min\{rV-\mu p V'-\frac{1}{2}\sigma^2p^2V'',V-w\}=0\qquad{\rm on}\; (0,+\infty).
\end{equation}

We prove that $w$ is a viscosity supersolution of \eqref{e:SubSolu1}. To this end, we only need to prove
\begin{equation}\label{e:SupeSolu1}
rw(p_O)-\mu p \varphi'(p_O)-\frac{1}{2}\sigma^2p^2\varphi''(p_O)\geq 0
\end{equation}
for some function $\varphi\in C^2(\mathcal{N}(p_O))$ such that $w(p_O)=\varphi(p_O)$ and $w(p)\geq\varphi(p)$ on some neighbourhood $\mathcal{N}(p_O)$ of $p_O$, since $rw-\mu p w'-\sigma^2p^2w''/2\geq0$ on $(0,p_O)\cup(p_O+\infty)$.

Noting that $p_O$ is a minimizer of $w-\varphi$ on $\mathcal{N}(p_O)$, we have $w'(p_O)-\varphi'(p_O)=0$ and $w_-''(p_O)-\varphi''(p_O)\geq 0$, i.e., $\varphi'(p_O)=0$ and $\varphi''(p_O)\leq 0$. In addition, thanks to $C-rK_O>0$ and $C+rK_I\leq 0$, $K_I+K_O<0$. So \eqref{e:SupeSolu1} holds.

In summery, $w$ is a viscosity solution of
\[
\min\{rV-\mu p V'-\frac{1}{2}\sigma^2p^2V'',V-w\}=0\qquad{\rm on}\; (0,+\infty).
\]

Note that $H(0^+)=w(0^+)$. Then, by uniqueness of viscosity solutions (see \eqref{i:ViscSolu} of Theorem \ref{t:ClasOpti}), we get $H(p)=w(p)$ for $p\in(0,+\infty)$. Consequently, the exercise region is $(0,+\infty)$, i.e., $\tau_I^*:=0$ a.s.\ is a maximizer of \eqref{e:EntrDeci} by \eqref{i:Maxi} of Theorem \ref{t:ClasOpti}.

\noindent 4. Assume that $r>\mu$, $C-rK_O>0$, $C+rK_I> 0$ and $K_I+K_O\geq0$.

First assume $K_I+K_O>0$. Then, in light of \eqref{i:ExerSub} of Theorem \ref{t:ClasOpti}, we have
\[
\mathcal{S}\subset[\exp(-\mu\delta)(C+rK_I),+\infty)\cup\{p_O\}.
\]
Note that $p_O<\exp(-\mu\delta)(C-rK_O)$ by Corollary \ref{c:ExTrUpBo}, and $K_I+K_O>0$. These imply $p_O<\exp(-\mu\delta)(C+rK_I)$. Consequently, by following the proof of \eqref{i:ExerForm} of Theorem \ref{t:ClasOpti} (see \cite[p.~104]{Pham2009}) and using \eqref{i:C1Cont} of Theorem \ref{t:ClasOpti}, there is a point $p_I\in[\exp(-\mu\delta)(C+rK_I),+\infty)$ such that
\[
H(p)=G(p)-k_1p-k_0\qquad\text{for $p\in[p_I,+\infty)$}
\]
and
\begin{equation}\label{e:ContRegi1}
rH-\mu p H'-\frac{1}{2}\sigma^2p^2H''=0\;\qquad\text{on $(0,p_I)$}.
\end{equation}

Thus, by the Lipschitz property of $H$, we have $H(p)=Bp^{\lambda_2}$ for some constant $B$ from \eqref{e:ContRegi1}.

Note that $H$ is $C^1$ continuous at $p_I$ by \eqref{i:C1Cont} of Theorem \ref{t:ClasOpti}. We get the following system
\begin{equation}\label{e:C1Cont}
\left\{
\begin{aligned}
&Bp_I^{\lambda_2}=Ap_I^{\lambda_1}+\frac{\exp((\mu-r)\delta)}{r-\mu}p_I-\exp(-r\delta)\left(\frac{C}{r}+K_I\right)\\
&\lambda_2B p_I^{\lambda_2-1}=\lambda_1Ap_I^{\lambda_1-1}+\frac{\exp((\mu-r)\delta)}{r-\mu},
\end{aligned}\right.
\end{equation}
from which we obtain
\[
A(\lambda_2-\lambda_1)p_I^{\lambda_1}+\frac{\exp((\mu-r)\delta)}{r-\mu}(\lambda_2-1)p_I-\lambda_2\exp(-r\delta)\left(\frac{C}{r}+K_I\right)=0.
\]

We will show ahead in Lemma \ref{l:LargSolu} that the above algebraic equation has only two roots. One is less than $p_O$, and the other is greater than $p_O$. Since $p_I\geq\exp(-\mu\delta)(C+rK_I)$, $p_O<\exp(-\mu\delta)(C-rK_O)$ by Corollary \ref{c:ExTrUpBo}, and $K_I+K_O>0$, we must choose the greater one. Furthermore, we have
\[
B=\lambda_1\lambda_2^{-1}Ap_I^{\lambda_1-\lambda_2}+\exp((\mu-r) \delta)\frac{{p_I}^{1-\lambda_2}}{\lambda_2(r-\mu)}.
\]

For proving the exercise region is $[p_I,+\infty)$, we only need to show
\begin{equation}\label{e:ElimpO}
H(p_O)>G(p_O)-k_1p_O-k_0.
\end{equation}
To see this, consider the function $f(p):=H(p)-G(p)+k_1p+k_0$ for $p\in [0, p_O]$. Then we have $f(0)=\exp(-r\delta)(K_I+K_O)>0$. In addition, $f'(p)=\lambda_2 Bp^{\lambda_2-1}>0$ for $p\in (0, p_O)$, since $B>0$ by \eqref{e:C1Cont} and Lemma \ref{l:LargSolu}. The inequality \eqref{e:ElimpO} follows.

Now consider the case $K_I+K_O=0$. We refer to the following Step 6. To solve the systems \eqref{e:C1ContLeft} and \eqref{e:C1ContRigh}, we put $B_1=p_I^{(1)}=0$, then the systems \eqref{e:C1ContLeft} and \eqref{e:C1ContRigh} are reduced to \eqref{e:C1Cont}. By repeating the proof of the case $K_I+K_O>0$, we achieve our aim.

\noindent 5. Assume that $r>\mu$, $C-rK_O> 0$, $C+rK_I>0$, $K_I+K_O< 0$ and $p_O\geq\exp(-\mu\delta)(C+rK_I)$. The proof of this case is the same as that of the case \eqref{i:Inst2}.

\noindent 6. Assume that $r>\mu$, $C-rK_O> 0$, $C+rK_I>0$, $K_I+K_O< 0$ and $p_O<\exp(-\mu\delta)(C+rK_I)$.

In this case, we have
\[
\mathcal{S}\subset(0,p_O]\cup[\exp(-\mu\delta)(C+rK_I),+\infty).
\]
Thus, by following the proof of \eqref{i:ExerForm} of Theorem \ref{t:ClasOpti} (see \cite[p.~104]{Pham2009}), the exercise region is of the form $(0,p_I^{(1)}]\cup[p_I^{(2)},+\infty)$ for some $p_I^{(1)}\in(0,p_O]$ and $p_I^{(2)}\in[\exp(-\mu\delta)(C+rK_I),+\infty)$. On the continuation region $(p_I^{(1)},p_I^{(2)})$, $H$ satisfies the equation
\[
rH-\mu p H'-\frac{1}{2}\sigma^2p^2H''=0
\]
by \eqref{i:C2Cont} of Theorem \ref{t:ClasOpti}.

Thus we have $H(p)=B_1p^{\lambda_1}+B_2p^{\lambda_2}$ on $(p_I^{(1)},p_I^{(2)})$ for some constant $B_1$ and $B_2$.

Note that $H$ is $C^1$ continuous at $p_I^{(1)}$ and $p_I^{(2)}$ by \eqref{i:C1Cont} of Theorem \ref{t:ClasOpti}. We get the following systems
\begin{equation}\label{e:C1ContLeft}
\left\{
\begin{aligned}
&B_1{p_I^{(1)}}^{\lambda_1}+B_2{p_I^{(1)}}^{\lambda_2}=-\exp(-r\delta)(K_I+K_O)\\
&\lambda_1B_1{p_I^{(1)}}^{\lambda_1-1}+\lambda_2B_2{p_I^{(1)}}^{\lambda_2-1}=0
\end{aligned}\right.
\end{equation}
and
\begin{equation}\label{e:C1ContRigh}
\left\{
\begin{aligned}
&B_1{p_I^{(2)}}^{\lambda_1}+B_2{p_I^{(2)}}^{\lambda_2}=A{p_I^{(2)}}^{\lambda_1}+\frac{\exp((\mu-r)\delta)}{r-\mu}p_I^{(2)}-\exp(-r\delta)\left(\frac{C}{r}+K_I\right)\\
&\lambda_1B_1{p_I^{(2)}}^{\lambda_1-1}+\lambda_2B_2{p_I^{(2)}}^{\lambda_2-1}=\lambda_1A{p_I^{(2)}}^{\lambda_1-1}+\frac{\exp((\mu-r)\delta)}{r-\mu},
\end{aligned}\right.
\end{equation}
from which, by solving $B_1$ and $B_2$, respectively, we obtain \eqref{e:pI1pI2}.
\end{proof}

\begin{remark}
We will prove in Theorem \ref{t:optimalIOD} that $p_I$, $p_I^{(1)}$ and $p_I^{(2)}$ are the price triggers of entry decision.
\end{remark}

\begin{corollary}
The optimal entry triggers $p_I$'s in \eqref{i:EntrTrig1} and \eqref{i:EntrTrig2} of Theorem {\rm\ref{t:EntrDeci}} satisfy $p_I>\exp(-\mu\delta)(C+rK_I)$; the optimal entry triggers $p_I^{(1)}$ and $p_I^{(2)}$ in \eqref{i:DoubEntrTrig} of Theorem {\rm\ref{t:EntrDeci}} satisfy $p_I^{(1)}<\exp(-\mu\delta)(C-rK_O)$ and $p_I^{(2)}>\exp(-\mu\delta)(C+rK_I)$, respectively.
\end{corollary}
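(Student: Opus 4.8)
The plan is to split the statement into three groups. For $p_I^{(1)}$ in \eqref{i:DoubEntrTrig} nothing new is needed: Step~6 of the proof of Theorem~\ref{t:EntrDeci} exhibits the lower exercise interval as $(0,p_I^{(1)}]$ with $p_I^{(1)}\le p_O$, and Corollary~\ref{c:ExTrUpBo} gives $p_O<\exp(-\mu\delta)(C-rK_O)$; chaining the two yields $p_I^{(1)}<\exp(-\mu\delta)(C-rK_O)$. For $p_I$ in \eqref{i:EntrTrig1} it is a one-line computation: here $C/r+K_I>0$, so dividing the closed form $p_I=\exp(-\mu\delta)\frac{\lambda_2}{\lambda_2-1}(r-\mu)(C/r+K_I)$ by the positive number $\exp(-\mu\delta)(C/r+K_I)$ shows that $p_I>\exp(-\mu\delta)(C+rK_I)$ is equivalent to $\lambda_2(r-\mu)>r(\lambda_2-1)$, i.e.\ to $r-\lambda_2\mu>0$; and since $\lambda_2$ solves $r-\mu\lambda-\frac{1}{2}\sigma^2\lambda(\lambda-1)=0$ with $\lambda_2>1$, we have $r-\lambda_2\mu=\frac{1}{2}\sigma^2\lambda_2(\lambda_2-1)>0$.

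The substance is in the remaining two inequalities---$p_I>\exp(-\mu\delta)(C+rK_I)$ in \eqref{i:EntrTrig2} and $p_I^{(2)}>\exp(-\mu\delta)(C+rK_I)$ in \eqref{i:DoubEntrTrig}---which I would treat in one stroke. Write $q:=\exp(-\mu\delta)(C+rK_I)$ and let $p^{*}$ denote the trigger in question. The proof of Theorem~\ref{t:EntrDeci} already provides $p^{*}\ge q$ and $q>p_O$, so on a left neighbourhood of $p^{*}$ the obstacle $g:=G-k_1(\cdot)-k_0$ takes the smooth form $g(p)=Ap^{\lambda_1}+c_2p+c_0$ with $c_2=\exp((\mu-r)\delta)/(r-\mu)>0$ and $c_0=-\exp(-r\delta)(C/r+K_I)<0$, while there $H$ equals $Bp^{\lambda_2}$ in case \eqref{i:EntrTrig2} and $B_1p^{\lambda_1}+B_2p^{\lambda_2}$ in case \eqref{i:DoubEntrTrig}, with the coefficient of $p^{\lambda_2}$ positive in each instance. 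Set $D:=H-g$ on that neighbourhood: then $D>0$ on the continuation region, $D(p^{*})=D'(p^{*})=0$ by the $C^{1}$ smooth fit of Theorem~\ref{t:ClasOpti}\eqref{i:C1Cont}, and $D$ has the shape $\alpha p^{\lambda_1}+\beta p^{\lambda_2}-c_2p-c_0$ with $\beta>0$.

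Now $D''(p)=p^{\lambda_1-2}\bigl(\alpha\lambda_1(\lambda_1-1)+\beta\lambda_2(\lambda_2-1)p^{\lambda_2-\lambda_1}\bigr)$, whose bracket is strictly increasing in $p$; hence $D''$ changes sign at most once, from negative to positive. Combined with $D>0$ immediately to the left of $p^{*}$ and $D(p^{*})=D'(p^{*})=0$, this forces $D_-''(p^{*})>0$: if $D_-''(p^{*})\le0$, then $D''<0$ on a left neighbourhood of $p^{*}$, so $D$ is concave there with vanishing value and derivative at $p^{*}$, hence $D\le0$ to the left, a contradiction. Finally I would equate two expressions for $rg-\mathcal{L}g$ at $p^{*}$: passing to the limit $p\uparrow p^{*}$ in the interior equation $rH-\mathcal{L}H=0$ and using $H(p^{*})=g(p^{*})$, $H'(p^{*})=g'(p^{*})$ gives $rg(p^{*})-\mathcal{L}g(p^{*})=\frac{1}{2}\sigma^{2}(p^{*})^{2}D_-''(p^{*})>0$, whereas the explicit form of $g$ gives $rg(p)-\mathcal{L}g(p)=\exp((\mu-r)\delta)(p-q)$ for $p>p_O$. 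Equating the two at $p=p^{*}$ yields $p^{*}>q$, as claimed.

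I expect the only genuine obstacle to be precisely this strictness: the non-strict bound $p^{*}\ge q$ is immediate from the inclusion $\mathcal{S}\cap(p_O,+\infty)\subset\{\,p>p_O:rg-\mathcal{L}g\ge0\,\}=[q,+\infty)$, already used in the proof of Theorem~\ref{t:EntrDeci} via Theorem~\ref{t:ClasOpti}\eqref{i:ExerSub}, so the whole content is ruling out the degenerate configuration $p^{*}=q$ in which the trigger sits exactly at the zero of $rg-\mathcal{L}g$. The one point to watch is that $H$ is merely $C^{1}$ (not $C^{2}$) at $p^{*}$, so it is the one-sided value $D_-''(p^{*})$ that enters, and the ``$D''$ changes sign at most once'' observation is exactly what makes the resulting one-sided second-order inequality strict. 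A purely computational route---substituting $q$ into the algebraic equation defining $p^{*}$ and tracking the sign, which after simplification boils down to the monotone inequality $(\lambda_2+|\lambda_1|)(1-1/\lambda_2)^{1+|\lambda_1|}<\lambda_2-1$ (plus monotonicity in $K_I+K_O$)---also works but is considerably more laborious.
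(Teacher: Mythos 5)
Your proof is correct and follows essentially the same route as the paper: the easy parts ($p_I$ in case \eqref{i:EntrTrig1} via the closed form and $r-\mu\lambda_2>0$; $p_I^{(1)}$ via $p_I^{(1)}\leq p_O$ chained with Corollary \ref{c:ExTrUpBo}) are handled identically, and for the two substantive inequalities the paper likewise reduces everything to the strict positivity of the one-sided second derivative of $U:=Bp^{\lambda_2}-G+k_1p+k_0$ (your $D=H-g$) at the trigger, followed by the same comparison of $rg-\mathcal{L}g=\exp((\mu-r)\delta)(p-q)$ with the interior equation for $H$. The only real difference is how that strict positivity is obtained: the paper anchors the argument at $p_O$, using $U(p_O)\geq 0$, $U'(p_O)>0$ and $U(p_I)=U'(p_I)=0$ to force a zero of $U''$ in $(p_O,p_I)$ and then invoking the at-most-one-sign-change property of $U''$, whereas you argue locally at the trigger via the concavity/tangent-line contradiction with $D>0$ on the continuation region---both rest on exactly the same structural fact about $D''$, and your variant has the small advantage of treating \eqref{i:EntrTrig2} and \eqref{i:DoubEntrTrig} uniformly where the paper proves the former and declares the latter ``similar''.
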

\begin{proof}
1. For the case \eqref{i:EntrTrig1} of Theorem \ref{t:EntrDeci}, we have
\[
p_I=\exp(-\mu \delta)\frac{\lambda_2}{\lambda_2-1}(r-\mu)\left(\frac{C}{r}+K_I\right).
\]
In addition, note that $1/\lambda_2>\mu/r$. The inequality $p_I>\exp(-\mu\delta)(C+rK_I)$ follows.

\noindent 2. Consider the case \eqref{i:EntrTrig2} of Theorem \ref{t:EntrDeci}.

Define a function $U$ by
\[
U(p):=Bp^{\lambda_2}-G(p)+k_1p+k_0\qquad\text{for $p\in[p_O,+\infty)$}.
\]
Then we have $U(p_O)\geq0$ and $U(p_I)=0$.

We prove the equation $U''(p)=0$ has a solution in $(p_O,p_I)$. To this end, suppose that the equation $U''(p)=0$ has no solution in $(p_O,p_I)$. Then the function $U'(\cdot)$ is strictly monotonous on $[p_O,p_I]$. In addition, note that $U'(p_O)=\lambda_2Bp_O^{\lambda_2-1}>0$ and $U'(p_I)=0$. We get $U'(p)>0$ for $p_O<p<p_I$. Consequently, $0\leq U(p_O)<U(p_I)=0$, which is a contradiction.

On the other hand, by noting $U''(p)=\lambda_2(\lambda_2-1)Bp^{\lambda_2-2}-\lambda_1(\lambda_1-1)Ap^{\lambda_1-2}$, the equation $U''(p)=0$ has at most one solution in $(p_O,+\infty)$.

Therefore, $U''(p_I)>0$, and then
\[
\begin{split}
&r(G(p_I)-k_1p_I-k_0)-\mu p_I(G'(p_I)-k_1)-\frac{1}{2}\sigma^2p_I^2G''(p_I)\\
&\quad>rH(p_I)-\mu p_IH'(p_I)-\frac{1}{2}\sigma^2p_I^2H_{-}''(p_I)=0,
\end{split}
\]
i.e.,
\[
p_I>\exp(-\mu\delta)(C+rK_I),
\]
where we have used $rG(p_I)-\mu p_IG'(p_I)-\sigma^2p_I^2G''(p_I)/2=p_I-C$.

\noindent 3. The inequality $p_I^{(1)}<\exp(-\mu\delta)(C-rK_O)$ follows from $p_I^{(1)}\leq p_O$ and Corollary \ref{c:ExTrUpBo}. The proof of the inequality $p_I^{(2)}>\exp(-\mu\delta)(C+rK_I)$ is similar to Step 2.
\end{proof}

\begin{lemma}\label{l:LargSolu}
Assume that $r>\mu$, $C-rK_O>0$, $C+rK_I> 0$ and $K_I+K_O\geq0$. Then the equation
\[
A(\lambda_2-\lambda_1)p^{\lambda_1}+\frac{\exp((\mu-r)\delta)}{r-\mu}(\lambda_2-1)p-\lambda_2\exp(-r\delta)\left(\frac{C}{r}+K_I\right)=0
\]
has only two solutions $p_1$ and $p_2$ in $(0, +\infty)$ satisfying $p_1\leq p_O$ and $p_2> p_O$. Furthermore, $\lambda_1Ap_2^{\lambda_1}+\exp((\mu-r)\delta)/(r-\mu)>0$.
\end{lemma}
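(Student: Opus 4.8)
The plan is to analyze the function
\[
\phi(p):=A(\lambda_2-\lambda_1)p^{\lambda_1}+\frac{\exp((\mu-r)\delta)}{r-\mu}(\lambda_2-1)p-\lambda_2\exp(-r\delta)\left(\frac{C}{r}+K_I\right)
\]
on $(0,+\infty)$ by elementary calculus, using the signs of the coefficients. First I would record the signs: from Theorem \ref{t:ExitTrigger} we have $A=\exp((\mu-r)\delta)p_O^{1-\lambda_1}/(\lambda_1(\mu-r))$, and since $\lambda_1<0$ and $\mu-r<0$ we get $A>0$; also $\lambda_2-\lambda_1>0$, $\lambda_2-1>0$, $r-\mu>0$, and $C/r+K_I>0$. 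Hence $\phi$ is the sum of a strictly decreasing term $A(\lambda_2-\lambda_1)p^{\lambda_1}$ (because $\lambda_1<0$), a strictly increasing linear term, and a negative constant. Differentiating, $\phi'(p)=\lambda_1 A(\lambda_2-\lambda_1)p^{\lambda_1-1}+\exp((\mu-r)\delta)(\lambda_2-1)/(r-\mu)$, which is strictly increasing in $p$ (since $\lambda_1-1<0$ and the coefficient $\lambda_1 A(\lambda_2-\lambda_1)<0$), moving from $-\infty$ at $0^+$ to the positive constant $\exp((\mu-r)\delta)(\lambda_2-1)/(r-\mu)$ as $p\to+\infty$. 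So $\phi'$ has a unique zero $\bar p$, $\phi$ is strictly decreasing on $(0,\bar p)$ and strictly increasing on $(\bar p,+\infty)$. Combined with $\phi(0^+)=-\lambda_2\exp(-r\delta)(C/r+K_I)<0$ and $\phi(p)\to+\infty$ as $p\to+\infty$, this forces $\phi(\bar p)<0$ and shows $\phi$ has exactly one zero $p_2$ in $(\bar p,+\infty)$ and at most one zero in $(0,\bar p)$.

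Next I would pin down that there really are two zeros and locate them relative to $p_O$. The cleanest route is to evaluate $\phi$ at $p_O$ and show $\phi(p_O)\le 0$; then since $\phi$ is continuous, strictly decreasing to the left of $\bar p$ with $\phi(0^+)<0$ and strictly increasing to the right with $\phi(+\infty)=+\infty$, one concludes $p_O\in[\bar p,p_2]$, hence the unique right-hand zero satisfies $p_2> p_O$ provided $\phi(p_O)<0$ (the boundary case $\phi(p_O)=0$ giving $p_2=p_O$, handled separately), and there is a second zero $p_1\le p_O$ on the decreasing branch. To get $\phi(p_O)\le 0$ I would substitute the explicit value $p_O=\exp(-\mu\delta)\tfrac{\lambda_1}{\lambda_1-1}(r-\mu)(C/r-K_O)$ from \eqref{e:pO}; using the defining relation $\lambda_1 A p_O^{\lambda_1-1}+1/(r-\mu)=(\exp((\mu-r)\delta)-1)/(\mu-r)$ from the $C^1$-fit in the proof of Theorem \ref{t:ExitTrigger} (equivalently $A p_O^{\lambda_1}=\exp((\mu-r)\delta)p_O/(\lambda_1(\mu-r))$), the expression for $\phi(p_O)$ collapses, and the inequality $\phi(p_O)\le 0$ should reduce to $K_I+K_O\ge 0$ together with $C/r-K_O>0$, i.e. exactly the standing hypotheses. (The limiting case $K_I+K_O=0$ is consistent with $p_1=p_O$, matching the reduction announced in Step 4 of the proof of Theorem \ref{t:EntrDeci}.)

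Finally, for the last assertion $\lambda_1 A p_2^{\lambda_1}+\exp((\mu-r)\delta)/(r-\mu)>0$, I would use $\phi(p_2)=0$ to solve for $A(\lambda_2-\lambda_1)p_2^{\lambda_1}$ and substitute: the claim becomes a linear inequality in $p_2$ of the form $c_1 p_2> c_0$ with $c_1,c_0>0$, which holds precisely when $p_2$ exceeds an explicit threshold; that threshold should come out to $\exp(-\mu\delta)(C+rK_I)$ or smaller, and since we already know $p_2>p_O$ and can compare $p_O$ (or $\bar p$) with that threshold, the inequality follows. Alternatively, and perhaps more transparently, I would note that $\lambda_1 A p^{\lambda_1}+\exp((\mu-r)\delta)p/(r-\mu)$ is (up to the factor from the $C^1$-fit) the derivative data entering the second line of \eqref{e:C1Cont}, so its positivity at $p_2=p_I$ is equivalent to $B>0$, which is what is actually needed in Step 4 of Theorem \ref{t:EntrDeci}; establishing $\phi'(p_2)>0$ (immediate, since $p_2$ lies on the increasing branch) and translating it through the two equations of \eqref{e:C1Cont} yields $B>0$ directly.

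The main obstacle I anticipate is the bookkeeping in showing $\phi(p_O)\le 0$: it requires substituting the closed form of $p_O$ and the $C^1$-matching identity for $A$ simultaneously and watching the signs of $\lambda_1$, $\mu-r$, and the cost combinations carefully, since several factors are negative. Everything else is a routine monotonicity/intermediate-value argument once the sign of $\phi(p_O)$ is settled.
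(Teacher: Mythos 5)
Your overall strategy (monotonicity of $\phi'$ to get ``at most two roots'', evaluating $\phi$ at $p_O$ via the $C^1$-matching identities from Theorem \ref{t:ExitTrigger}, and deducing the final inequality from $\phi'(p_2)>0$) is essentially the paper's, but there is a genuine error that breaks the existence of the left root $p_1$. You assert $\phi(0^+)=-\lambda_2\exp(-r\delta)(C/r+K_I)<0$, i.e.\ you let the term $A(\lambda_2-\lambda_1)p^{\lambda_1}$ vanish as $p\to0^+$. Since $\lambda_1<0$ and $A>0$, this term in fact tends to $+\infty$, so $\lim_{p\to0^+}\phi(p)=+\infty$. This is not a cosmetic slip: it contradicts your own (correct) description of $\phi$ as strictly decreasing on $(0,\bar p)$ and then strictly increasing, and with your value of $\phi(0^+)$ the function would be negative on all of $(0,\bar p]$ and would have exactly one zero, so the claimed second root could not exist. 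The blow-up at $0^+$ is precisely what produces $p_1$ on the decreasing branch; the paper records $\lim_{p\to0^+}E(p)=+\infty$ for exactly this purpose.

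A second, smaller gap: you place $p_O$ on the increasing branch (``$p_O\in[\bar p,p_2]$'') and are inconsistent about the boundary case $\phi(p_O)=0$, first saying it gives $p_2=p_O$ (which would contradict the lemma's strict inequality $p_2>p_O$) and later that it is consistent with $p_1=p_O$. The paper settles this by also computing $E'(p_O)=(\lambda_1-1)\exp((\mu-r)\delta)/(r-\mu)<0$ from the $C^1$-fit of $G$ at $p_O$, which puts $p_O$ strictly on the decreasing branch; together with $E(p_O)=-\lambda_2\exp(-r\delta)(K_I+K_O)\le0$ this yields $p_1\le p_O$ and $p_2>p_O$ in one stroke, including the case $K_I+K_O=0$ (where $p_1=p_O$). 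You should add the computation of $\phi'(p_O)$; your program for evaluating $\phi(p_O)$ and for the final inequality via $\phi'(p_2)>0$ is otherwise sound and matches the paper.
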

\begin{proof}
The proof is similar to that of \cite[Lemma 5.5]{Zhang2015}.

\noindent 1. Defined a function $E$ by
\[
E(p):=A(\lambda_2-\lambda_1)p^{\lambda_1}+\frac{\exp((\mu-r)\delta)}{r-\mu}(\lambda_2-1)p-\lambda_2\exp(-r\delta)\left(\frac{C}{r}+K_I\right).
\]

Suppose that the equation $E(p)=0$ has three solutions in $(0, +\infty)$. Then by Rolle's mean value theorem, there is a positive number $\xi$ such that
$E''(\xi)=0$, i.e.,
\[
A(\lambda_2-\lambda_1)\lambda_1(\lambda_1-1)\xi^{\lambda_1-2}=0,
\]
which is impossible. Thus the equation $E(p)=0$ has at most two solutions in $(0, +\infty)$.

\noindent 2. In this step, we will estimate $E(p_O)$ and $E'(p_O)$.

We first estimate $E(p_O)$ as follows.
\[
\begin{split}
E(p_O)&=(\lambda_2-\lambda_1)\left(\frac{\exp((\mu-r)\delta)}{\mu-r}p_O+\exp(-r\delta)\left(\frac{C}{r}-K_O\right)\right)\\
&\qquad+\frac{\exp((\mu-r)\delta)}{r-\mu}(\lambda_2-1)p_O-\lambda_2\exp(-r\delta)\left(\frac{C}{r}+K_I\right)\\
&=-\lambda_2\exp(-r\delta)(K_I+K_O)\leq 0,
\end{split}
\]
where we have used continuity of the function $G$ at $p_O$ for the first equality.

Now we estimate $E'(p_O)$.
\[
\begin{split}
E'(p_O)&=(\lambda_1-\lambda_2)\frac{\exp((\mu-r)\delta)}{r-\mu}+(\lambda_2-1)\frac{\exp((\mu-r)\delta)}{r-\mu}\\
&=(\lambda_1-1)\frac{\exp((\mu-r)\delta)}{r-\mu}<0,
\end{split}
\]
where we have used $C^1$ continuity of the function $G$ at $p_O$ for the first equality.

\noindent 3. Note that $\lim\limits_{p\rightarrow 0^+}E(p)=+\infty$, $\lim\limits_{p\rightarrow +\infty}E(p)=+\infty$, $E(p_O)\leq 0$ and $E'(p_O)<0$. We find that the equation $E(p)=0$ has only two solutions $p_1$ and $p_2$ in $(0, +\infty)$ satisfying $p_1\leq p_O$ and $p_2> p_O$.

Furthermore, $E'(p_2)>0$. It follows that
\[
\begin{split}
&\lambda_1Ap_2^{\lambda_1}+\frac{\exp((\mu-r)\delta)}{r-\mu}\\
&\quad >\lambda_1Ap_2^{\lambda_1}+\frac{\lambda_2-1}{\lambda_2-\lambda_1}\cdot\frac{\exp((\mu-r)\delta)}{r-\mu}\\
&\quad =\frac{1}{\lambda_2-\lambda_1}E'(p_2)>0.
\end{split}
\]
The proof is complete.
\end{proof}

Recall the problem \eqref{e:optimalD}. The following Theorem \ref{t:optimalIOD} provides a solution of entry and exit decisions and an explicit expression of the maximal expected present value from the project. To prove Theorem \ref{t:optimalIOD}, we need the following lemma.

\begin{lemma}[{\cite[Lemma 4.1]{Zhang2015}}]\label{l:CloUni}
Assume that $r>\mu$. Then the family of random variables $\{\exp((\mu-r-\sigma^2/2)\tau+\sigma B(\tau)): \tau\in\mathcal{T}\}$ is uniformly integrable, where $\mathcal{T}$ is the collection of all stopping times.
\end{lemma}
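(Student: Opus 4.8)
\emph{Proof proposal.} The plan is to show that the family $\{Y_\tau:\tau\in\mathcal T\}$, where
\[
Y_\tau:=\exp\bigl((\mu-r-\tfrac12\sigma^2)\tau+\sigma B(\tau)\bigr)
\]
(and $Y_\tau:=0$ on $\{\tau=+\infty\}$, which is the consistent convention since $(\mu-r-\tfrac12\sigma^2)t+\sigma B(t)\to-\infty$ a.s.\ as $t\to+\infty$ when $r>\mu$), is \emph{bounded in $L^p$} for a suitable $p>1$. Uniform integrability then comes for free from the elementary bound $\mathbb E\bigl[Y_\tau\,\mathbbm 1_{\{Y_\tau>K\}}\bigr]\le K^{1-p}\,\mathbb E[Y_\tau^{\,p}]$, whose right-hand side tends to $0$ as $K\to+\infty$ uniformly over $\tau\in\mathcal T$ once $\sup_\tau\mathbb E[Y_\tau^{\,p}]<+\infty$ is known.

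First I would fix $p\in\bigl(1,\,1+\tfrac{2(r-\mu)}{\sigma^2}\bigr)$ — this interval is nonempty precisely because $r>\mu$, which is where the hypothesis is used — and introduce the Dol\'eans--Dade exponential
\[
N_p(t):=\exp\Bigl(p\sigma B(t)-\tfrac12 p^2\sigma^2 t\Bigr),\qquad t\ge 0,
\]
a nonnegative martingale with $N_p(0)=1$ and $\mathbb E[N_p(t)]=1$, of the same type as the martingale already exploited in Sections \ref{Model} and \ref{EnExDe}. A short rearrangement of the exponents gives, for every $\tau\in\mathcal T$,
\[
Y_\tau^{\,p}=N_p(\tau)\,\exp\!\Bigl(p\bigl[(\mu-r)+\tfrac12\sigma^2(p-1)\bigr]\tau\Bigr),
\]
and by the choice of $p$ the bracketed coefficient is nonpositive, so that $Y_\tau^{\,p}\le N_p(\tau)$ holds pointwise on $\Omega$.

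Next I would establish $\mathbb E[N_p(\tau)]\le1$ for every $\tau\in\mathcal T$. Optional stopping applied to the bounded time $\tau\wedge t$ gives $\mathbb E[N_p(\tau\wedge t)]=1$; since $N_p$ is a nonnegative martingale it converges a.s.\ as $t\to+\infty$, so $N_p(\tau\wedge t)\to N_p(\tau)$ a.s., and Fatou's lemma yields $\mathbb E[N_p(\tau)]\le\liminf_t\mathbb E[N_p(\tau\wedge t)]=1$. Combined with the previous display this gives $\sup_{\tau\in\mathcal T}\mathbb E[Y_\tau^{\,p}]\le1<+\infty$, which is the required $L^p$-boundedness, and the argument is complete.

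I expect the main obstacle to be exactly this last estimate $\mathbb E[N_p(\tau)]\le1$: one cannot invoke the optional stopping theorem directly at the possibly infinite time $\tau$, because $N_p$ is only $L^1$-bounded and fails to be uniformly integrable (it degenerates to $0$), so the Fatou detour is essential. It is the strictly negative coefficient of $\tau$, secured by choosing $p$ strictly below the threshold $1+2(r-\mu)/\sigma^2$, that lets the hypothesis $r>\mu$ do the work; when $r=\mu$ only the trivial bound $\sup_\tau\mathbb E[Y_\tau]\le1$ survives, which is not enough for uniform integrability, consistent with $r>\mu$ being essential here.
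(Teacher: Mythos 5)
Your argument is correct: the exponent rearrangement $Y_\tau^{\,p}=N_p(\tau)\exp\bigl(p[(\mu-r)+\tfrac12\sigma^2(p-1)]\tau\bigr)$ checks out, the choice $p\in\bigl(1,1+2(r-\mu)/\sigma^2\bigr)$ makes the drift coefficient nonpositive, and the Fatou detour for $\mathbb{E}[N_p(\tau)]\le 1$ correctly handles the failure of optional stopping at a possibly infinite time, giving $L^p$-boundedness and hence uniform integrability by de la Vall\'ee Poussin. Note that the paper itself supplies no proof of this lemma --- it is imported verbatim from \cite[Lemma 4.1]{Zhang2015} --- but your exponential-martingale domination is the standard route to this statement and is, in substance, the argument used in that reference.
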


\begin{theorem}\label{t:optimalIOD}
In each case of Theorem \ref{t:EntrDeci}, $\tau_I^*$ is an optimal entry decision time, and $\tau_O^*$ is an optimal exit decision time, where $\tau_O^*:=+\infty$ if $C\leq rK_O$ and $\tau_O^*:=\inf\{t:t>\tau_I^*\ \text{and}\ P(t)\leq p_O\}$ if $C>rK_O$, respectively. In addition, we have $J(p)=H(p)$.
\end{theorem}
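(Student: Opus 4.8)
The plan is to reduce the delayed problem to the instant one and then to unfold the two-stopping-time instant problem \eqref{e:optimalIns} into the nested pair of single-stopping-time problems \eqref{e:ExitDeci} and \eqref{e:EntrDeci} already solved in Theorems \ref{t:ExitTrigger} and \ref{t:EntrDeci}. By Theorem \ref{t:TranSpec} we have $J(p)=\widetilde J(p)$, and by the maximizer-transfer clause of Theorem \ref{t:Transf} (invoked in the proof of Theorem \ref{t:TranSpec}) any maximizer of \eqref{e:optimalIns} is also a maximizer of \eqref{e:optimalD}. Hence it suffices to prove, uniformly across the six cases of Theorem \ref{t:EntrDeci}, that $\widetilde J(p)=H(p)$ and that the pair $(\tau_I^*,\tau_O^*)$ maximizes \eqref{e:optimalIns}. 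Since $r>\mu$, Lemma \ref{l:CloUni} makes $\{e^{-r\tau}P(\tau):\tau\in\mathcal T\}$ uniformly integrable and $e^{-r\tau}\le1$, and since $G$ and $H$ have linear growth in $p$ the quantities $e^{-r\tau}G(P(\tau))$ and $e^{-r\tau}H(P(\tau))$ are integrable; this keeps all the expectations below finite and justifies the conditioning and limiting steps.

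For the upper bound, fix an admissible pair $\tau_I\le\tau_O$ and condition the objective of \eqref{e:optimalIns} on $\mathscr F_{\tau_I}$. Writing $t=\tau_I+s$ and $\rho:=\tau_O-\tau_I$, which is a stopping time for the shifted filtration $\{\mathscr F_{\tau_I+s}\}_{s\ge0}$, the part of the objective depending on $\tau_O$ equals $e^{-r\tau_I}$ times
\[
\int_0^{\rho}e^{-rs}\left(P(\tau_I+s)-C\right)ds-e^{-r\rho}\left(l_1P(\tau_I+\rho)+l_0\right),
\]
whose conditional expectation given $\mathscr F_{\tau_I}$, by the strong Markov property of the geometric Brownian motion $P$, is $e^{-r\tau_I}$ times the value of the functional in \eqref{e:ExitDeci} evaluated at the exit rule $\rho$ started from $P(\tau_I)$; this is at most $e^{-r\tau_I}G(P(\tau_I))$ almost surely. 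Taking $\mathbb E^p$ and using the definition of $H$, the objective of \eqref{e:optimalIns} at $(\tau_I,\tau_O)$ has expectation at most $\mathbb E^p[e^{-r\tau_I}(G(P(\tau_I))-k_1P(\tau_I)-k_0)]\le H(p)$; taking the supremum over $\tau_I\le\tau_O$ gives $\widetilde J(p)\le H(p)$.

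For the reverse inequality and for optimality, take $\tau_I=\tau_I^*$, the maximizer of \eqref{e:EntrDeci} provided by Theorem \ref{t:EntrDeci}, and $\tau_O=\tau_O^*$. If $C\le rK_O$ then $\tau_O^*=+\infty$ and, by Theorem \ref{t:ExitTrigger}, $G(p)=p/(r-\mu)-C/r$ is exactly the never-exit value, so the conditional expectation above is $e^{-r\tau_I^*}G(P(\tau_I^*))$. If $C>rK_O$ then $\tau_O^*-\tau_I^*=\inf\{s>0:P(\tau_I^*+s)\le p_O\}$, which by the strong Markov property and Theorem \ref{t:ExitTrigger} applied at the (random) state $P(\tau_I^*)$ is a maximizer of the exit problem \eqref{e:ExitDeci} started from $P(\tau_I^*)$, so again the conditional expectation is $e^{-r\tau_I^*}G(P(\tau_I^*))$. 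In either case the objective of \eqref{e:optimalIns} at $(\tau_I^*,\tau_O^*)$ has expectation $\mathbb E^p[e^{-r\tau_I^*}(G(P(\tau_I^*))-k_1P(\tau_I^*)-k_0)]$, which equals $H(p)$ because $\tau_I^*$ maximizes \eqref{e:EntrDeci}. Therefore $\widetilde J(p)=H(p)$, the pair $(\tau_I^*,\tau_O^*)$ maximizes \eqref{e:optimalIns} and hence \eqref{e:optimalD}, and $J(p)=\widetilde J(p)=H(p)$.

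The one genuinely delicate point is the strong Markov / conditioning step: one must check that $\tau_O-\tau_I$ is a stopping time for the time-shifted filtration, that the inner exit functional is state-by-state dominated by $G$ with the hitting-time rule attaining the supremum, and that the uniform integrability from Lemma \ref{l:CloUni} justifies interchanging expectation and conditioning. This also covers the degenerate subcase where $P(\tau_I^*)\le p_O$, so that $\tau_O^*=\tau_I^*$, since $\inf\{s>0:P(\tau_I^*+s)\le p_O\}=0$ there by the oscillation of $P$, consistently with the formula $H(p)=-e^{-r\delta}(K_I+K_O)$ on $\{p\le p_O\}$ in the relevant cases of Theorem \ref{t:EntrDeci}. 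Everything else is routine bookkeeping.
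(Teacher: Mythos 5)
Your proposal is correct in outline but takes a genuinely different route from the paper. You prove $\widetilde J=H$ by a dynamic-programming decomposition: condition on $\mathscr{F}_{\tau_I}$, use the strong Markov property to recognize the inner (exit) part of the objective as the functional of \eqref{e:ExitDeci} started from $P(\tau_I)$ and dominate it by $G(P(\tau_I))$, then take the outer supremum to land on $H$; the lower bound comes from plugging in $(\tau_I^*,\tau_O^*)$ and using that each nested supremum is attained. The paper instead runs a verification argument: it applies the Meyer--It\^o formula to $e^{-rt}H(P(t))$ on $[0,S_k]$ and to $e^{-rt}G(P(t))$ on $[S_k,T_k]$ with the localizing times $S_k=\tau_I\wedge k\wedge R_k$, $T_k=\tau_O\wedge k\wedge R_k$, kills the stochastic integrals by optional sampling, and uses the global variational inequalities $\mathcal{A}G+p-C\le 0$, $\mathcal{A}H\le 0$, $k_1p+k_0-G+H\ge 0$, $l_1p+l_0+G\ge 0$ (extracted from the proofs of Theorems \ref{t:ExitTrigger} and \ref{t:EntrDeci}) to get \eqref{e:CutLess}, passing to the limit via Lemma \ref{l:CloUni}. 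What each buys: your route uses Theorems \ref{t:ExitTrigger} and \ref{t:EntrDeci} purely as black boxes (value functions plus maximizers) and needs no pointwise inequalities on $G$ and $H$; the paper's route needs those inequalities and the $C^1$/convexity of $G,H$, but in exchange it never has to justify the nesting identity $\widetilde J=\sup_{\tau_I}\mathbb{E}^p[e^{-r\tau_I}(G(P(\tau_I))-k_1P(\tau_I)-k_0)]$.

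That nesting identity is exactly the point where your write-up is thinner than a complete proof: you must show that $\rho=\tau_O-\tau_I$ is a stopping time of the shifted filtration, that the exit value computed over stopping times of this (possibly larger) filtration is still $G(P(\tau_I))$ --- which requires either the supermartingale (Snell envelope) characterization of $G$ or a regular-conditional-probability argument, not just the strong Markov property applied to a fixed state --- and that the interchange of supremum, conditioning and expectation is legitimate under the integrability supplied by Lemma \ref{l:CloUni}. You correctly flag this as the delicate step and your treatment of the degenerate subcase $P(\tau_I^*)\le p_O$ (enter and immediately exit, value $-e^{-r\delta}(K_I+K_O)$) is consistent with the formulas in Theorem \ref{t:EntrDeci}; but as written the crux is asserted rather than proved, whereas the paper's It\^o-formula verification establishes the same bound without ever invoking it.
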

\begin{proof}
1. Recall the problem \eqref{e:optimalIns}
\[
\begin{split}
\widetilde{J}(p):=\sup\limits_{\tau_I\leq\tau_O}\mathbb{E}^p&\left[\int_{\tau_I}^{\tau_O}\exp(-r t)(P(t)-C)\mathrm{d}t\right.\\
&\quad\left.-\exp(-r\tau_I)(k_1P(\tau_I)+k_0)-\exp(-r \tau_O)(l_1P(\tau_O)+l_0)\right].
\end{split}
\]

\noindent 2. Define a sequence of stopping times $(R_k, k\in\mathbb{N})$ by $R_k:=\inf\{t: t>0, P(t)>k\}$. For an entry decision time $\tau_I$, define a sequence of stopping times $(S_k, k\in\mathbb{N})$ by $S_k:=\tau_I\wedge k\wedge R_k$. Similarly, for an exit decision time $\tau_O(\geq \tau_I)$, define a sequence of stopping times $(T_k, k\in\mathbb{N})$ by $T_k:=\tau_O\wedge k\wedge R_k$. Define an operator $\mathcal{A}$ by
\[
\mathcal{A}\varphi(p):=\frac{1}{2}\sigma^2p^2\varphi''(p)+\mu p \varphi'(p)-r \varphi(p).
\]
Note that the functions $G$ and $H$ are $C^1$ convex functions. It follows from Meyer-It\^{o} formula (see\cite[p.~218, Theorem 70]{Pro2005}) that
\[
\begin{aligned}
\exp(-rT_k)G(P(T_k))=&\exp(-rS_k)G(P(S_k))+\int_{S_k}^{T_k}\exp(-rt)\mathcal{A}G(P(t))\mathrm{d}t\\
&+\sigma\int_{S_k}^{T_k}\exp(-rt)P(t)G'(P(t))\mathrm{d}B(t)
\end{aligned}
\]
and
\[
\begin{aligned}
\exp(-rS_k)H(P(S_k))=&H(P(0))+\int_{0}^{S_k}\exp(-rt)\mathcal{A}H(P(t))\mathrm{d}t\\
&+\sigma\int_{0}^{S_k}\exp(-rt)P(t)H'(P(t))\mathrm{d}B(t).
\end{aligned}
\]
Therefore,
\begin{equation}\label{e:CutVal3}
\begin{aligned}
&\int_{S_k}^{T_k}\exp(-rt)\left(P(t)-C\right)\mathrm{d}t-\exp(-r S_k)(k_1P(S_k)+k_0)\\
&\quad\hskip 17em -\exp(-r T_k)(l_1P(T_k)+l_0)\\
&\quad=\int_{S_k}^{T_k}\exp(-rs)\left(\mathcal{A}G(P(t))+P(t)-C\right)\mathrm{d}t
+\int_{0}^{S_k}\exp(-rt)\mathcal{A}H(P(t))\mathrm{d}t\\
&\quad\quad-\exp(-r S_k)(k_1P(S_k)+k_0-G(P(S_k))+H(P(S_k)))\\
&\quad\quad-\exp(-r T_k)(l_1P(T_k)+l_0+G(P(T_k)))\\
&\quad\quad+\sigma\int_{S_k}^{T_k}\exp(-rt)P(t)G'(P(t))\mathrm{d}B(t)\\
&\quad\quad+\sigma\int_{0}^{S_k}\exp(-rt)P(t)H'(P(t))\mathrm{d}B(t)+H(P(0)).
\end{aligned}
\end{equation}

\noindent 3.
By Doob's optional sampling theorem (see \cite[p.~19, Theorem 3.22]{KarShr1991} or \cite[p.~9, Theorem 16]{Pro2005}), we have
\[
\mathbb{E}^p\left[\int_{S_k}^{T_k}\exp(-rs)P(t)G'(P(t))\mathrm{d}B(t)\right]=0
\]
and
\[
\mathbb{E}^p\left[\int_{0}^{S_k}\exp(-rs)P(t)H'(P(t))\mathrm{d}B(t)\right]=0.
\]
In addition, note that, by the proof of Theorem \ref{t:ExitTrigger} and \ref{t:EntrDeci},
\[
\mathcal{A}G(p)+p-C\leq 0,
\]
\[
l_1p+l_0+G(p)\geq 0,
\]
\[
\mathcal{A}H(p)\leq 0,
\]
\[
k_1p+k_0-G(p)+H(p)\geq 0.
\]
Then it follows from (\ref{e:CutVal3}) that
\begin{equation}\label{e:CutLess}
\begin{aligned}
\mathbb{E}^p\left[\int_{S_k}^{T_k}\exp(-rt)\left(P(t)-C)\right)\mathrm{d}t-\exp(-r S_k)(k_1P(S_k)+k_0)\right.&\\
\left.-\exp(-r T_k)(l_1P(T_k)+l_0)\right]&\leq H(p).
\end{aligned}
\end{equation}

Note that
\begin{equation}\label{e:rgeqmu}
\begin{split}
&\lim\limits_{{}^{T\rightarrow+\infty}}\mathbb{E}\left[\left.\int_{0}^{T}\exp(-r t)P(t)\mathrm{d}t\right|P(0)=p\right]\\
&\quad=\lim\limits_{{}^{T\rightarrow+\infty}}\int_0^{T}\exp(-rt)p\exp(\mu t)\mathrm{d}t\\
&\quad=\frac{p}{r-\mu},
\end{split}
\end{equation}
where we have used the fact that the process
\[
\left(\exp\left(-\frac{1}{2}\sigma^2 t+\sigma B(t)\right), t\geq 0\right)
\] is a martingale (see \cite[p.~288, Corollary 5.2.2]{App2009}) for the first equality.

Taking limits in the inequality \eqref{e:CutLess} and using Lemma \ref{l:CloUni} and (\ref{e:rgeqmu}), we get
\begin{equation}\label{e:LimCutVal3}
\begin{aligned}
\mathbb{E}^p\left[\int_{\tau_I}^{\tau_O}\exp(-rt)\left(P(t)-C)\right)\mathrm{d}t-\exp(-r \tau_I)(k_1P(\tau_I)+k_0)\right.&\\
\left.-\exp(-r \tau_O)(l_1P(\tau_O)+l_0)\right]&\leq H(p).
\end{aligned}
\end{equation}
Thus
\begin{equation}\label{e:CutValleq}
\widetilde{J}(p)\leq H(p).
\end{equation}

\noindent 4. Take $\tau_I=\tau_I^*$ in the definition of $S_k$ and $\tau_O=\tau_O^*$ in the definition of $T_k$, where $\tau_I^*$'s are defined in Theorem \ref{t:EntrDeci}, and $\tau_O^*$'s are given by $\tau_O^*:=+\infty$ if $C\leq rK_O$ and $\tau_O^*:=\inf\{t:t>\tau_I^*\ {\rm and}\ P(t)\leq p_O\}$ if $C>rK_O$, respectively.

Note that
\[
\mathcal{A}H(P(t))=0\;\mathrm{for}\;0<t<\tau^*_I\;\text{by the proof of Theorem \ref{t:EntrDeci}},
\]
and
\[
\mathcal{A}G(P(t))+P(t)-C=0\;\mathrm{for}\;\tau^*_I<t<\tau^*_O\;\text{by the proof of Theorem \ref{t:ExitTrigger}}.
\]

Then, in light of (\ref{e:CutVal3}), we have
\[
\begin{aligned}
&\mathbb{E}^p\left[\int_{S_k}^{T_k}\exp(-rt)\left(P(t)-C)\right)\mathrm{d}t-\exp(-r S_k)(k_1P(S_k)+k_0)\right.\\
&\qquad\left.-\exp(-r T_k)(l_1P(T_k)+l_0)\right]\\
&\quad=-\mathbb{E}^p\left[\exp(-r S_k)(k_1P(S_k)+k_0-G(P(S_k))+H(P(S_k)))\right.\\
&\quad\qquad\quad\left.+\exp(-r T_k)(l_1P(T_k)+l_0+G(P(T_k)))\right]+H(p).
\end{aligned}
\]

Since the functions $G$ and $H$ are at most linear growth, we find, through a similar way to that of (\ref{e:LimCutVal3}),
\[
\begin{aligned}
&\mathbb{E}^p\left[\int_{\tau_I^*}^{\tau_O^*}\exp(-rt)\left(P(t)-C)\right)\mathrm{d}t-\exp(-r \tau_I^*)(k_1P(\tau_I^*)+k_0)\right.\\
&\qquad\left.-\exp(-r \tau_O^*)(l_1P(\tau_O^*)+l_0)\right]\\
&\quad=H(p).
\end{aligned}
\]

The above equality and inequality \eqref{e:CutValleq} show us that $\widetilde{J}(p)=H(p)$ and $(\tau_I^*,\tau_O^*)$ is a solution of the problem \eqref{e:optimalIns}. Consequently, These and Theorem \ref{t:TranSpec} complete the proof.
\end{proof}
\begin{example}
Take $r=0.2$, $\mu=0.1$, $\sigma=0.3$, $\delta=1$, $C=10$, $K_I=-20$ and $K_O=10$. Then we have $r>\mu$, $C-rK_O>0$, $C+rK_I>0$, $K_I+K_O<0$ and $p_O=2.66841<5.42902=\exp(-\mu\delta)(C+rK_I)$. Thus we apply \eqref{i:DoubEntrTrig} of Theorem \ref{t:EntrDeci}, and get $p_I^{(1)}=1.96101$ and $p_I^{(2)}=6.94641$. Therefore, by Theorem \ref{t:optimalIOD}, $\tau_I^*:=\inf\{t:t>0, P(t)\leq1.96101\ {\rm or}\ P(t)\geq6.94641\}$ is an optimal entry decision time and $\inf\{t:t>\tau_I^*, P(t)\leq2.66841\}$ is an optimal exit decision time.
\end{example}

\bibliographystyle{mybst2}
\bibliography{Xbib}

\end{document}